\documentclass{amsart}
\usepackage{amsmath}
\usepackage{amssymb}
\usepackage{amsfonts}
\usepackage{amsthm}
\usepackage{enumerate}
\usepackage[margin=1.5in]{geometry}
\usepackage{mathtools}
\usepackage{mathrsfs}
\usepackage{hyperref}
\usepackage[table,xcdraw]{xcolor}
\usepackage[T1]{fontenc}
\usepackage[utf8]{inputenc}
\usepackage{enumitem}
\usepackage{bbm}
\usepackage[scaled]{beramono}

\usepackage[ backend=biber,
  style=alphabetic,
  maxbibnames=99,
  maxalphanames=99
]{biblatex}
\hypersetup{
    colorlinks,
    linkcolor={red!50!black},
    citecolor={blue!50!black},
    urlcolor={blue!80!black}
}

\def\bal#1\nal{\begin{align*}#1\end{align*}}
\def\ball#1\nall{\begin{align}#1\end{align}}
\def\lbal#1\lnal{\begin{flalign*}#1\end{flalign*}}

\makeatletter
\renewcommand*\env@matrix[1][\arraystretch]{%
  \edef\arraystretch{#1}%
  \hskip -\arraycolsep
  \let\@ifnextchar\new@ifnextchar
  \array{*\c@MaxMatrixCols c}}
\makeatother

\newcommand{\longhookrightarrow}{\lhook\joinrel\longrightarrow}

\newcommand{\C}{\mathbb{C}}

\newcommand{\Z}{\mathbb{Z}}
\newcommand{\Q}{\mathbb{Q}}

\theoremstyle{definition}

\newtheorem{thm}{Theorem}[section]

\newtheorem{prop}[thm]{Proposition}
\newtheorem{lem}[thm]{Lemma}

\newtheorem{rem}[thm]{Remark}

\numberwithin{equation}{section}

\newcommand{\p}{\mathfrak{p}}

\DeclareFontFamily{U}{wncy}{}
    \DeclareFontShape{U}{wncy}{m}{n}{<->wncyr10}{}
    \DeclareSymbolFont{mcy}{U}{wncy}{m}{n}
    \DeclareMathSymbol{\Sh}{\mathord}{mcy}{"58} 

\DeclareFieldFormat{postnote}{#1}
\DeclareFieldFormat{multipostnote}{#1}

\newcommand{\F}{\mathbb{F}}

\newcommand{\Sel}{\operatorname{Sel}}

\newcommand{\one}{\mathbbm{1}}

\begin{document}

\title{Infinitely many elliptic curves over $\Q(i)$\\ of exact ranks 4 and 6 with $j$-invariant 1728}

\author{Ben Savoie}
\address{Beijing International Center for Mathematical Research,
Peking University}

\email{bensavoie163@gmail.com}

\begin{abstract}
For each $r\in\{4,6\}$, we construct an explicit one-parameter family
of elliptic curves over $\Q(i)$ containing infinitely many pairwise
nonisomorphic curves genuinely defined over $\Q(i)$ with
$j$-invariant $1728$ and rank exactly $r$. We construct explicit
$\Q(i)$-rational points to bound the ranks from below. Kai's theorem on prime values of linear patterns over number fields
provides specializations with controlled local behavior, allowing us to obtain
matching upper bounds via $[1+i]$-descent. The construction extends the strategy of the author's earlier
rank-$2$ paper by replacing a symmetric Gaussian-prime configuration
with systems of binary linear forms satisfying several complementary
square identities. In the rank-$6$ case, the support vectors attached to the three
constructed points and $(0,0)$ span the self-dual Reed--Muller code
$\mathrm{RM}(1,3)$, which also occurs as the kernel of the quadratic-residue
Laplacian governing the Selmer group.
\end{abstract}
\maketitle

\markboth{\textnormal{\footnotesize Ben Savoie}}
{\textnormal{\footnotesize Exact ranks 4 and 6 over $\Q(i)$ with
$j$-invariant 1728}}
\section{Introduction}

For an elliptic curve $E$ over a number field $K$, the Mordell--Weil
theorem shows that $E(K)$ is a finitely generated abelian group. The
rank of this group is typically difficult to determine and remains
poorly understood in families. In particular, little is known about
which ranks occur infinitely often over a fixed number field.

There has been substantial recent progress on prescribed positive ranks. In 2025, the rank-$1$ case over every number field was
proved independently by Zywina \cite{zywina2025rankone} and by Koymans
and Pagano \cite{koymanspagano2025rankone}. In the same year, Zywina
proved that infinitely many elliptic curves over $\Q$ have rank exactly
$2$ \cite{zywina2025there}\footnote{Assuming the parity conjecture,
Byeon--Jeong and Jeong obtained earlier infinitude results for elliptic
curves over $\Q$ of rank exactly $2$ \cite{byeon2016infinitely,jeong2019infinitely}.}, while the author proved that there are infinitely
many elliptic curves over $\Q(i)$ with $j$-invariant $1728$ and rank
exactly $2$ \cite{savoie2025rank2}. In 2026, Zywina proved that for
every number field $K$ and every $0\leq r\leq4$, there are infinitely
many elliptic curves over $K$ of rank exactly $r$
\cite{zywina2026smallrank}.

Throughout this paper, we work with elliptic curves over $\Q(i)$
that have $j$-invariant $1728$. Every such curve is a quartic twist of
$y^2=x^3+x$ and is $\Q(i)$-isomorphic to
\begin{align}\label{eq-quartic-twist}
E_b:\quad y^2=x^3+bx
\qquad \text{for some} \quad b\in\Q(i)^\times.
\end{align}
Moreover, $E_b$ and $E_{b'}$ are isomorphic over $\Q(i)$ if and only if
$b/b'\in(\Q(i)^{\times})^4$. We refer to $b$ as the \emph{twist coefficient}. The action of $\Z[i]$ by complex multiplication is defined over
$\Q(i)$, so $E_b(\Q(i))$ is naturally a $\Z[i]$-module. Hence,
\[
\operatorname{rk}_{\Z}E_b(\Q(i))
=
2\operatorname{rk}_{\Z[i]}E_b(\Q(i)),
\]
and every Mordell--Weil rank in this family is even.

Our main theorem realizes the next two positive ranks in this family.
In particular, to our knowledge, the rank-$6$ statement is the first
infinitude theorem over a fixed number field for a prescribed exact rank
greater than $4$.

We call an elliptic curve $E/\Q(i)$ \emph{genuinely defined over
$\Q(i)$} if it is not obtained by base change from $\Q$. For curves
with $j$-invariant $1728$, this distinction is particularly relevant.
If $E/\Q$ has $j(E)=1728$, then $E$ is isomorphic to its quadratic
twist by $-1$, and \cite[10.16]{Sil} yields
\begin{align}\label{eq-base-change-double}
\operatorname{rk}E(\Q(i))
=
2\operatorname{rk}E(\Q).
\end{align}
Thus, a rank-$4$ or rank-$6$ family over $\Q(i)$ could, in principle,
arise by base change from a rank-$2$ or rank-$3$ family over $\Q$.
However, we are not aware of any theorem producing infinitely many
elliptic curves over $\Q$ with $j$-invariant $1728$ and rank exactly
$2$ or $3$, so Theorem~\ref{thm-main} cannot presently be recovered
from known results over $\Q$ by base change. More generally, base change from proper subfields can contribute substantially to the ranks occurring infinitely often over
a number field, as observed in
\cite[Section~12]{park2019heuristic}.

\begin{thm}\label{thm-main}
For each $r\in\{4,6\}$, there are infinitely many pairwise
nonisomorphic elliptic curves genuinely defined over $\Q(i)$ with
$j$-invariant $1728$ and rank exactly $r$.

More precisely, for each $r\in\{4,6\}$ there are infinitely many
$t\in\Q(i)$ such that the curves
\[
E_{r,t}:\qquad y^2=x^3+b_r(t)x
\]
are pairwise nonisomorphic over $\Q(i)$, genuinely defined over
$\Q(i)$, and have rank exactly $r$, where
$b_4(T),b_6(T)\in\Z[i][T]$ are given by
\begin{align*}
b_4(T)
&=-T(128T-i)((64-48i)T-i),\\
b_6(T)
&=(T+1)(3T+1)((-2+2i)T+i)((-6-6i)T+i)\\
&\quad\times(2iT+i)(-6iT-2-5i)((8+10i)T+2+3i)
((24-30i)T+16-11i).
\end{align*}
\end{thm}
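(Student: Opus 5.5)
The plan has two halves: a lower bound from explicit points, and a matching upper bound from a $[1+i]$-descent after specializing $t$ so that the prime factors of $b_r(t)$ are controlled.

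\textbf{Lower bound.} For each $r$ I will write down explicit points in $E_{r,T}(\Q(i)(T))$.
\begin{itemize}
\item For $r=4$ I need two points; for $r=6$, three. Each should have $x$-coordinate a product of some of the linear factors of $b_r(T)$, times a constant, so that $x^3+b_r(T)x$ is a square in $\Q(i)(T)$.
\item The ``complementary square identities'' among the binary linear forms should make $x+b_r/x$ a square for suitable subproducts.
\item Independence over $\Z[i]$ will come from the image under the $[1+i]$-descent map (or the 2-isogeny Kummer map $P\mapsto x(P)$ mod squares). The support vectors of these $x$-coordinates, together with that of $(0,0)$, should span a subspace of the right dimension. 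For $r=6$ this is the $\mathrm{RM}(1,3)$ code in $\F_2^8$, with the eight linear factors as coordinates.
\item Silverman's specialization theorem, or directly the injectivity of the Kummer images once the linear forms take values in distinct primes, then gives $\operatorname{rk}E_{r,t}(\Q(i))\ge r$ for all the $t$ we use.
\end{itemize}

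\textbf{Upper bound.} Apply Kai's theorem on prime values of linear patterns over $\Q(i)$ to the system of linear forms dividing $b_r(T)$, written as binary forms $L_j(m,n)$ with $t=m/n$.
\begin{itemize}
\item This produces infinitely many $t$ for which every $L_j$ is, up to a unit and fixed bounded factors, a Gaussian prime $\pi_j$.
\item Congruence conditions modulo a power of $1+i$ and modulo the small primes fix the local behaviour at $1+i$ and at the archimedean place.
\item The $[1+i]$-Selmer group $\Sel^{(1+i)}(E_b/\Q(i))$ then sits inside the group generated by units, $1+i$ and the $\pi_j$, modulo squares. For each $\pi_j$ the local conditions are determined by quartic/quadratic residue symbols $(\pi_k/\pi_j)$.
\item Choose the congruence classes so these symbols are prescribed. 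The Selmer rank then becomes the kernel dimension of a matrix over $\F_2$, the quadratic-residue Laplacian of the prime configuration.
\item Show that this kernel has exactly the dimension forced by the lower bound; for $r=6$ it should be $\mathrm{RM}(1,3)$ itself.
\item Then $\dim\Sel^{(1+i)}$ matches the contribution of the known points plus torsion, giving $\operatorname{rk}_{\Z[i]}E_b(\Q(i))\le r/2$.
\end{itemize}

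\textbf{Remaining claims.}
\begin{itemize}
\item Pairwise nonisomorphic: the $b_r(t)$ have different prime supports as $t$ varies, so no ratio is a fourth power.
\item Genuinely over $\Q(i)$: a base change would force $b$ to be, up to fourth powers, in $\Q$ times a unit-type factor, so its prime support would be stable under complex conjugation. Our split Gaussian primes $\pi_j$ are not conjugate to one another, which rules this out.
\end{itemize}

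The main obstacle is the Selmer computation. Kai's theorem controls which values are prime, but it does not obviously control the residue symbols between the prime values. Those symbols must instead be forced through quadratic/quartic reciprocity and the square identities among the forms, which tie $(\pi_k/\pi_j)$ to symbols at $1+i$ and at fixed small primes. I would first compute all pairwise symbols via reciprocity in terms of congruence data, then check the Laplacian kernel by direct linear algebra.
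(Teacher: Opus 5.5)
\textbf{The gap.} Your architecture matches the paper's: square-split points with Kummer classes $[e_j]$, Kai's theorem with a congruence condition modulo a fixed $q$, the odd local conditions turned into a quadratic-residue Laplacian, and an unpaired split prime for genuine definition. The plan breaks at ``choose the congruence classes so these symbols are prescribed.'' Reciprocity cannot do this.

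Write $\ell_j=A_jX+B_jY$, $\pi_j=\ell_j(\alpha,\beta)$ and $D_{j,k}=A_jB_k-A_kB_j$. The identity $A_j\pi_k-A_k\pi_j=D_{j,k}\beta$, together with reciprocity against the primary $\beta'=i\beta$ and $\pi_j\equiv A_j\alpha\pmod{\beta'}$, gives
\[
\left(\frac{\pi_k}{\pi_j}\right)=\left(\frac{-iD_{j,k}A_j}{\pi_j}\right)\left(\frac{A_j}{\beta'}\right)\left(\frac{\alpha}{\beta'}\right).
\]
The first two factors are fixed by $(\alpha,\beta)\bmod q$. The factor $\kappa=\left(\frac{\alpha}{\beta'}\right)$ is a Jacobi symbol between the two parameters themselves. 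No congruence condition modulo a fixed $q$ determines it, and Kai's theorem gives no handle on it.

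Since $\kappa$ multiplies all pairwise symbols at once, the congruence data determine the quadratic-residue graph only up to complementation. Over $\F_2$, with eight vertices, that means the Laplacian is known only up to adding the all-ones matrix $J$.

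\textbf{What is needed instead.} Your ``direct linear algebra'' must be done for both candidates. The paper shows that $L_0$ and $L_0+J$ both have row space, hence kernel, equal to $\mathrm{RM}(1,3)$. This is visible from the affine form $(L_\epsilon)_{v,w}=ax+by+a+x+c+z+\epsilon$. It also uses that $\nu\notin\mathrm{RM}(1,3)$, since $\nu$ has odd weight, so the $s=1$ equations have no solution.

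The second Laplacian is not automatic. $C\subseteq\ker(L_0+J)$ holds because $C$ is even-weight, but an odd-weight $x$ with $L_0x=\one$ would enlarge the kernel, so this must be ruled out.

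The square identities do not supply the missing bit. They only say that the constructed classes are Selmer classes, which in rank $6$ gives $C\subseteq\ker L$. That is compatible with Laplacians of rank below $4$.

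In rank $4$ the situation is different, and the paper avoids reciprocity entirely. There $s_b=1$ and $\nu=\one$, so the system is $(L+I_4)x=s\one$. This is not a kernel computation: the constructed classes $[i\pi_1\pi_2]$ and $[i\pi_1\pi_3]$ have $s=1$. These two known solutions, together with symmetry and $L\one=0$, force $L+I_4$ to have rank $2$ with $\one$ in its image. That gives exactly $8$ solutions, whichever of $K_{1,3}$ or its complement occurs.

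\textbf{Smaller points.}
\begin{itemize}
\item ``Different prime supports as $t$ varies'' is not literally true. Replace it with the finite-to-one argument: each quartic-twist class arises from only finitely many pairs $(\alpha,\beta)$.
\item The split prime $\pi_j$ with $\overline{\pi_j}$ nonassociate to every $\pi_k$ has to be arranged by a congruence modulo an auxiliary split prime. The paper uses $7$ and $13$.
\end{itemize}
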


The difficulty in Theorem~\ref{thm-main} is to prescribe the rank
exactly, rather than merely to produce large lower bounds. Indeed,
Mestre proved that infinitely many elliptic curves over $\Q$ with
$j$-invariant $1728$ have rank at least $4$ \cite{mestre1992rang}.
After base change to $\Q(i)$, these curves have rank at least $8$ by
\eqref{eq-base-change-double}. However, Mestre's explicit family is not well suited to our approach to the
exact-rank problem: its twist coefficient has irreducible factors of
degree greater than $1$ over $\Q(i)$. By contrast, after homogenization,
the twist coefficients in our families are products of binary linear
forms, allowing us to apply prime-pattern results to the individual
factors and control the pairwise residue symbols appearing in the
descent.

There is also a geometric distinction between our constructions and
the recent prescribed-rank families of Zywina
\cite{zywina2026smallrank}. His explicit families are rational
elliptic surfaces with a rational point of order $2$, whose generic
Mordell--Weil rank is at most $4$ by
\cite{oguiso1991mordell}; see
\cite[Section~2]{zywina2026smallrank}. Our rank-$4$ family is likewise
a rational elliptic surface and attains this bound, whereas the
rank-$6$ family is an elliptic K3 surface. Thus, the rank-$6$
construction necessarily lies beyond the rational surface setting.

A common ingredient in much of the recent progress on prescribed
ranks is the combination of descent with results on prime values of
linear patterns. Kai's theorem extends the Green--Tao--Ziegler theorem
on such patterns to number fields \cite{kai2023linear}, and Zywina
uses it to choose specializations with controlled bad primes. Related
additive-combinatorial input also appears in work of Koymans and
Pagano on Hilbert's Tenth Problem and rank stability
\cite{koymans2024hilbert}. The author's earlier rank-$2$ paper instead
used Tao's theorem on constellations in the Gaussian primes
\cite{tao2006gaussian}. That construction remained inside the
quadratic-twist subfamily of the $j=1728$ quartic twists: the curves
had the form
\[
y^2=x^3-b^2x,
\]
where
\[
b
=
\prod_{j=1}^4\bigl(\beta+i^j(1+i)k\bigr)
=
\beta^4+4k^4,
\qquad
\beta\in\Z[i],\quad k\in\Z.
\]

To reach ranks $4$ and $6$, we work instead with the full family of
quartic twists and use more flexible systems of binary linear forms
designed to satisfy several complementary square identities.

The constructions begin with the elementary observation that if
\[
b=ef\qquad\text{and}\qquad e+f=R^2,
\]
then $(e,eR)\in E_b(\Q(i))$. The image of this point under the
$[1+i]$-Kummer map is represented by the squareclass of $e$. We will
refer to such a factorization $b=ef$, with $R \neq 0$, as a \emph{square-split
factorization}.\footnote{This elementary point-producing device is
standard in the construction of rational points on elliptic curves
with rational $2$-torsion. For example, for a curve
$y^2=x^3+Ax^2+Bx$, Dujella--Kazalicki--Peral
\cite[Section~1.1]{dujella2021elliptic} consider divisors $d$ of $B$
and impose the equivalent condition $d+A+B/d=\square$.}
The non-torsion point used in \cite{savoie2025rank2} arises from
a square-split factorization, with $e=4\beta^2k^2$. For the
higher-rank families constructed here, we seek $b\in\Z[i]$ admitting
several square-split factorizations $b=e_jf_j$ for which the corresponding squareclasses $[e_j]$ are independent modulo the torsion class $[b]$.

Realizing several such factorizations restricts the
number and arrangement of the linear factors dividing $b$. Suppose
that, in homogeneous variables $X$ and $Y$, we have
\[
B=u\ell_1\cdots\ell_n,
\qquad
u\in\Q(i)^\times,
\]
where the $\ell_j$ are binary linear forms. If $e_j$ and $f_j$ are
complementary subproducts satisfying
\[
e_j+f_j=R_j^2,
\]
then $e_j$ and $f_j$ must have the same degree, and this degree must
be even. Hence, $4\mid n$, so the first possibility is four factors.

Suppose then that $B=u\ell_1\ell_2\ell_3\ell_4$ admits two square-split factorizations with independent associated
squareclasses. Up to relabeling, their supports are given by the two partitions
\[
\{\ell_1,\ell_2\}\sqcup\{\ell_3,\ell_4\}
\qquad\text{and}\qquad
\{\ell_1,\ell_3\}\sqcup\{\ell_2,\ell_4\}.
\]
We therefore seek binary linear forms $\ell_1,\ldots,\ell_4$ and
$c_1,c_2\in\Q(i)^\times$ such that
\[
c_1\ell_1\ell_2+\frac{u}{c_1}\ell_3\ell_4
\qquad\text{and}\qquad
c_2\ell_1\ell_3+\frac{u}{c_2}\ell_2\ell_4
\]
are both squares. The resulting points give two Kummer classes that are independent
modulo the torsion class, and therefore give $\Z[i]$-rank at least $2$,
so ordinary rank at least $4$.

With four factors, this support pattern cannot produce a third
independent non-torsion class. Indeed, the four factors have only
three $2+2$ partitions. The corresponding support vectors have weight
$2$, while the torsion point $(0,0)$ has Kummer image $[b]$, corresponding to the all-ones vector. Together these span only the $3$-dimensional even-weight subspace of $\F_2^4$, so modulo $[b]$ there are at most two independent non-torsion Kummer classes. Since the number of factors must be divisible by $4$, the next possibility is eight factors.

With eight factors, we seek three square-split factorizations corresponding to $4+4$ partitions that give three independent non-torsion Kummer classes. Since their support vectors must lie in the kernel of the symmetric quadratic-residue Laplacian, we seek a self-dual subspace of $\F_2^8$ containing these vectors and the all-ones vector corresponding to $(0,0)$. Up to relabeling, this determines the support pattern: the eight factors may be indexed by $\F_2^3$ so that the three square-split factorizations correspond to the coordinate hyperplanes
\[
H_j=\{v\in\F_2^3:v_j=0\},
\qquad
j=1,2,3.
\]
The resulting $4$-dimensional space is the Reed--Muller code
$\mathrm{RM}(1,3)$, and for our rank-$6$ specializations it is exactly the kernel governing the Selmer group calculation.

The preceding combinatorics determine the number of factors and the
complementary products that must occur in the construction. Finding
explicit families then amounts to solving the resulting square
identities for homogeneous binary linear forms. The solutions we use
give the factors of $b_4$ and $b_6$ appearing in
Theorem~\ref{thm-main}.

The factor counts above refer to the homogeneous binary forms. In the
rank-$4$ family, one factor becomes constant after setting $Y=1$, so
$b_4(T)$ has degree $3$. Scaling $(X,Y)$ changes the twist coefficient
by a fourth power, so the $\Q(i)$-isomorphism class depends only on
$T=X/Y$. We therefore state the families in terms of $T$ and return
to homogeneous coordinates only when applying Kai's theorem and
computing residue symbols.

Kai's theorem is then used to choose infinitely many specializations
for which the linear forms take Gaussian prime values while satisfying
the local conditions required for the descent. The upper bounds on
the rank are obtained by descent through the endomorphism $[1+i]$.
The necessary local solubility conditions are special cases of the
graph-theoretic $2$-isogeny descent developed in
\cite{kling2024computing}, but we derive them here for
completeness. In rank $4$, the classes of the two constructed points force the
quadratic-residue graph to be the star $K_{1,3}$ or its complement,
and either possibility gives the sharp Selmer bound. In rank $6$, the
two possible quadratic-residue graphs have Laplacians with common
kernel $\mathrm{RM}(1,3)$, so the same structure governing the
constructed points also forces the Selmer bound to be sharp.

The paper is organized as follows. In Section~\ref{sec-descent} we
recall the $[1+i]$-descent used throughout, determine the torsion
subgroup of every quartic twist $E_b/\Q(i)$, record the consequence
of Kai's theorem needed for simultaneous Gaussian-prime values, and
give a criterion for genuine definition over $\Q(i)$. We also record
the combinatorics of square-split factorizations and their support
patterns. Sections~\ref{sec-rank4} and
\ref{sec-rank6} treat ranks $4$ and $6$, respectively.

\section{Descent and linear prime specializations}\label{sec-descent}

\subsection{\texorpdfstring{The $[1+i]$-descent}{The [1+i]-descent}}

Let $b\in\Q(i)^\times$ and let $E_b$ be as in
\eqref{eq-quartic-twist}. Set $P_0=(0,0)$. We use descent by the
degree-$2$ CM endomorphism $[1+i]$ of $E_b$. This is equivalent to
the standard $2$-isogeny descent after identifying its target with
$E_b$. Indeed, the isogeny
\begin{align}\label{eq-phi-isogeny}
\varphi:E_b&\longrightarrow E_{-4b},\notag\\[-1ex]
(x,y)&\longmapsto
\left(\frac{y^2}{x^2},\frac{y(b-x^2)}{x^2}\right)
\end{align}
has kernel $\{O,P_0\}$. Since $[i](x,y)=(-x,iy)$, the endomorphism
$[1+i]$ has the same kernel. Then, by the addition law on $E_b$, the $\Q(i)$-isomorphism
\begin{align}\label{eq-psi-isomorphism}
\psi_b:E_{-4b}&\overset{\sim}{\longrightarrow}E_b,\notag\\[-1ex]
(X,Y)&\longmapsto
\left(\frac{X}{(1+i)^2},-\frac{Y}{(1+i)^3}\right)
\end{align}
satisfies
\begin{align}\label{eq-one-plus-i-factorization}
[1+i]=\psi_b\circ\varphi.
\end{align}

For an isogeny $\alpha:E\to E'$ over $\Q(i)$, the
$\alpha$\textit{-Selmer group} of $E$ is
\[
\Sel_{\alpha}(E/\Q(i))
=
\left\{
\xi\in H^1(\Q(i),E[\alpha]):
\operatorname{res}_v(\xi)\in
\operatorname{im}(\delta_{\alpha,\Q(i)_v})
\text{ for every place }v
\right\}.
\]
Here $\delta_{\alpha,F}$ denotes the Kummer map
\[
\delta_{\alpha,F}:
E'(F)/\alpha E(F)\longrightarrow H^1(F,E[\alpha]),
\]
which is the connecting homomorphism arising from the short exact sequence
\[
0\longrightarrow E[\alpha]\longrightarrow E \overset{\alpha}{\longrightarrow} E'\longrightarrow 0.
\]

For $F=\Q(i)$ or $F=\Q(i)_v$, the common kernel
\[
E_b[\varphi]=E_b[1+i]=\{O,P_0\}
\]
and \eqref{eq-one-plus-i-factorization} identify the $\varphi$-Kummer
map with the $[1+i]$-Kummer map after applying $\psi_b$ to the target
of $\varphi$. Under the usual squareclass descriptions (see below), $\psi_b$
scales the $x$-coordinate by $(1+i)^{-2}$, which is a square in
$F^\times$, so the Kummer class is unchanged. Thus the local Kummer
images agree at every place, and there is a canonical identification
\begin{align}\label{eq-selmer-phi-equals-one-plus-i}
\Sel_{\varphi}(E_b/\Q(i))
\cong
\Sel_{1+i}(E_b/\Q(i)).
\end{align}
We work throughout with the $[1+i]$-Selmer group, using the
equivalent $\varphi$-descent only for the explicit Kummer
map and homogeneous-space calculations.

The $\operatorname{Gal}(\overline F/F)$-modules $E_b[1+i]$ and $\mu_2$ are isomorphic via $P_0 \mapsto -1$. By Kummer theory, we then have
\[
H^1(F,E_b[1+i])\cong F^\times/(F^\times)^2.
\]
Under this identification, the Kummer map is given by (see \cite[Section~X.4]{Sil})
\begin{align}\label{eq-kummer-x}
\delta_{1+i,F}(P)=
\begin{cases}
1,&P=O,\\
[b],&P=P_0,\\
[x],&P=(x,y)\text{ with }x\neq 0,
\end{cases}
\end{align}
where $[a]$ denotes the squareclass $a \cdot (F^\times)^2$. For $F=\Q(i)$ we write $\delta_b$ for the Kummer map.   

The Selmer group controls the possible Kummer classes of rational
points and provides an upper bound on the
Mordell--Weil rank. In general, such a bound involves an isogeny and
its dual, but in our setting the CM structure makes the relation
especially simple.

\begin{lem}\label{lem-rank-selmer}
For every $b\in\Q(i)^\times$,
\begin{align}\label{rank-Z[i]-dim}
\dim_{\F_2}E_b(\Q(i))/(1+i)E_b(\Q(i))
=\operatorname{rk}_{\Z[i]}E_b(\Q(i))+1.
\end{align}
Consequently, the Mordell--Weil rank of $E_b$ satisfies
\[
\operatorname{rk}_{\Z}E_b(\Q(i))
\leq
2\dim_{\F_2}\Sel_{1+i}(E_b/\Q(i))-2.
\]
\end{lem}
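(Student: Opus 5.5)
We compute $E_b(\Q(i))/(1+i)E_b(\Q(i))$ from the structure of $E_b(\Q(i))$ as a finitely generated $\Z[i]$-module, and then compare it with the Selmer group through the Kummer map.

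Since $\Z[i]$ is a PID, Mordell--Weil together with the CM action gives an isomorphism of $\Z[i]$-modules
\[
E_b(\Q(i))\cong \Z[i]^{\rho}\oplus T,
\qquad \rho=\operatorname{rk}_{\Z[i]}E_b(\Q(i)),
\]
where $T=E_b(\Q(i))_{\mathrm{tors}}$ is a finite $\Z[i]$-module. The ideal $(1+i)$ is prime with residue field $\F_2$, so $\Z[i]^{\rho}/(1+i)\Z[i]^{\rho}\cong\F_2^{\rho}$. For the torsion part, the structure theorem writes $T$ as a direct sum of cyclic modules $\Z[i]/\pi^{k}$ for primes $\pi$. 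The summands with $\pi\nmid 1+i$ have multiplication by $1+i$ bijective, so they contribute nothing to $T/(1+i)T$. Each summand with $\pi=1+i$ contributes exactly one copy of $\F_2$ to $T/(1+i)T$, and also exactly one copy of $\F_2$ to the kernel $T[1+i]$. Hence
\[
\dim_{\F_2}T/(1+i)T=\dim_{\F_2}T[1+i].
\]
Now $T[1+i]=E_b[1+i](\Q(i))=\{O,P_0\}$ has dimension $1$, because $P_0=(0,0)$ is rational. This gives \eqref{rank-Z[i]-dim} without needing to determine $T$ explicitly. Equivalently, one can use the exact sequence $0\to T[1+i]\to T\xrightarrow{1+i} T\to T/(1+i)T\to 0$ of finite groups, which forces $|T/(1+i)T|=|T[1+i]|=2$.

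For the inequality, the exact sequence $0\to E_b[1+i]\to E_b\xrightarrow{1+i}E_b\to 0$ gives an injective Kummer map
\[
\delta_b:E_b(\Q(i))/(1+i)E_b(\Q(i))\hookrightarrow H^1(\Q(i),E_b[1+i]).
\]
Its image lies in $\Sel_{1+i}(E_b/\Q(i))$, by compatibility with the local Kummer maps at every place. Therefore $\rho+1\le\dim_{\F_2}\Sel_{1+i}$. Combining this with $\operatorname{rk}_\Z=2\rho$, which was noted in the introduction, gives $\operatorname{rk}_\Z E_b(\Q(i))\le 2\dim\Sel_{1+i}-2$.

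The only point needing care is the torsion contribution. One might worry that $(1+i)$-power torsion of higher order, such as a rational point $Q$ with $(1+i)Q=P_0$, changes the count. The equality of orders of kernel and cokernel of $1+i$ on the finite group $T$ shows that it does not. I expect no real obstacle beyond stating this cleanly.
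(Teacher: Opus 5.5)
Your proposal is correct and follows essentially the same route as the paper: decompose $E_b(\Q(i))\cong\Z[i]^r\oplus T_b$, use $\#T_b/(1+i)T_b=\#T_b[1+i]=2$ from the kernel $\{O,P_0\}$, then inject into the Selmer group via the Kummer map and use $\operatorname{rk}_\Z=2r$. Your extra structure-theorem justification of the torsion count is a harmless elaboration of the same step.
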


\begin{proof}
Since $E_b(\Q(i))$ is a $\Z[i]$-module, we have the decomposition
\[
E_b(\Q(i))
\cong
\Z[i]^r\oplus T_b, 
\]
where $r:=\operatorname{rk}_{\Z[i]}E_b(\Q(i))$ and $T_b: =E_b(\Q(i))_{\mathrm{tors}}$. The free part contributes $r$ dimensions after reduction modulo $1+i$, while
\[
\#T_b/(1+i)T_b=\#T_b[1+i]=2,
\]
since the kernel of $[1+i]$ is $\{O,P_0\}$. This proves \eqref{rank-Z[i]-dim}.  
By the definition of the Selmer group, the global connecting
homomorphism gives an injection
\begin{align}\label{kummer-injection}
E_b(\Q(i))/(1+i)E_b(\Q(i))
\longhookrightarrow
\Sel_{1+i}(E_b/\Q(i)).
\end{align}
The rank bound now follows from \eqref{rank-Z[i]-dim} and $\operatorname{rk}_{\Z}E_b(\Q(i))=2r.$ 
\end{proof}

We will use the following consequence of the $[1+i]$-descent, which combines the lower bound coming from independent Kummer classes with a matching Selmer upper bound.

\begin{lem}\label{lem-sharp-descent}
Let $b\in\Q(i)^\times$, and suppose that
$P_1,\ldots,P_m\in E_b(\Q(i))$ are such that
\[
[b],\delta_b(P_1),\ldots,\delta_b(P_m)
\]
are linearly independent in
$\Q(i)^\times/(\Q(i)^\times)^2$. If
\[
\dim_{\F_2}\Sel_{1+i}(E_b/\Q(i))\leq m+1,
\]
then
\[
\operatorname{rk}_{\Z}E_b(\Q(i))=2m.
\]
Moreover, under the identification in \eqref{kummer-injection},
\begin{align}\label{selmer-equality-in-lem}
E_b(\Q(i))/(1+i)E_b(\Q(i))
=
\Sel_{1+i}(E_b/\Q(i)),
\end{align}
and
\[
\Sh(E_b/\Q(i))[2^\infty]=0.
\]
\end{lem}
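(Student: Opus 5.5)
The plan is to squeeze the image of the Kummer injection \eqref{kummer-injection} between a lower bound coming from the given points and the assumed Selmer upper bound.

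\emph{Lower bound.} The classes $[b]=\delta_b(P_0)$ and $\delta_b(P_1),\ldots,\delta_b(P_m)$ all lie in the image of $E_b(\Q(i))/(1+i)E_b(\Q(i))$ under $\delta_b$. By hypothesis they are $m+1$ linearly independent elements of $\Q(i)^\times/(\Q(i)^\times)^2$. Since \eqref{kummer-injection} is an injective $\F_2$-linear map into $\Sel_{1+i}(E_b/\Q(i))\subseteq \Q(i)^\times/(\Q(i)^\times)^2$, this gives
\[
m+1\le \dim_{\F_2}E_b(\Q(i))/(1+i)E_b(\Q(i))\le \dim_{\F_2}\Sel_{1+i}(E_b/\Q(i))\le m+1 .
\]
All inequalities are therefore equalities. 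The injection is then a map between $\F_2$-spaces of equal finite dimension, hence an isomorphism, which is \eqref{selmer-equality-in-lem}. From \eqref{rank-Z[i]-dim} in Lemma~\ref{lem-rank-selmer} we get $\operatorname{rk}_{\Z[i]}E_b(\Q(i))=m$, so $\operatorname{rk}_\Z E_b(\Q(i))=2m$.

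\emph{Sha.} I would use the exact sequence
\[
0\to E_b(\Q(i))/(1+i)E_b(\Q(i))\to \Sel_{1+i}(E_b/\Q(i))\to \Sh(E_b/\Q(i))[1+i]\to 0,
\]
which comes from the Kummer sequence for $[1+i]$, an endomorphism of $E_b$ over $\Q(i)$ with kernel $\{O,P_0\}$. Its surjectivity onto $\Sh[1+i]$ is the standard fact: an everywhere locally trivial class in $H^1(\Q(i),E_b)$ killed by $[1+i]$ lifts to $H^1(\Q(i),E_b[1+i])$, and its lift is Selmer by definition. Equality \eqref{selmer-equality-in-lem} then forces $\Sh(E_b/\Q(i))[1+i]=0$.

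The step I expect to need care is passing from $\Sh[1+i]=0$ to $\Sh[2^\infty]=0$. My approach is to argue through the $\Z[i]$-module structure. $\Sh(E_b/\Q(i))$ is a $\Z[i]$-module, since $\operatorname{End}_{\Q(i)}(E_b)=\Z[i]$ acts on $H^1(\Q(i),E_b)$ preserving local triviality. Its $2$-primary part equals its $(1+i)$-primary part, because $2=-i(1+i)^2$ with $-i$ a unit. Now take a nonzero $x\in\Sh[2^\infty]$, and let $n\ge1$ be minimal with $(1+i)^n x=0$. Then $(1+i)^{n-1}x$ is a nonzero element of $\Sh[1+i]$, a contradiction. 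Hence $\Sh(E_b/\Q(i))[2^\infty]=0$.
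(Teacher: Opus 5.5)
Your proposal is correct and follows the paper's proof essentially step for step. It uses the same dimension sandwich $m+1\le\dim_{\F_2}E_b(\Q(i))/(1+i)E_b(\Q(i))\le\dim_{\F_2}\Sel_{1+i}(E_b/\Q(i))\le m+1$, the rank formula from Lemma~\ref{lem-rank-selmer}, the Kummer exact sequence to conclude $\Sh(E_b/\Q(i))[1+i]=0$, and the minimal-power-of-$(1+i)$ argument via $2=-i(1+i)^2$; your remarks on the $\Z[i]$-module structure of $\Sh$ and on surjectivity onto $\Sh[1+i]$ simply make explicit what the paper cites from Silverman.
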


\begin{proof}
Since $[b]=\delta_b((0,0))$, the linear independence hypothesis
implies
\[
\dim_{\F_2}E_b(\Q(i))/(1+i)E_b(\Q(i))
=
\dim_{\F_2}\operatorname{im}(\delta_b)
\geq m+1.
\]
By \eqref{kummer-injection}, we then have 
\[
m+1
\leq
\dim_{\F_2}E_b(\Q(i))/(1+i)E_b(\Q(i))
\leq
\dim_{\F_2}\Sel_{1+i}(E_b/\Q(i))
\leq m+1.
\]
Thus, \eqref{selmer-equality-in-lem} holds, and
Lemma~\ref{lem-rank-selmer} yields $\operatorname{rk}_{\Z}E_b(\Q(i))=2m.$

Finally, the short exact sequence \cite[Theorem~X.4.2(a)]{Sil}
\[
0
\longrightarrow
E_b(\Q(i))/(1+i)E_b(\Q(i))
\longrightarrow
\Sel_{1+i}(E_b/\Q(i))
\longrightarrow
\Sh(E_b/\Q(i))[1+i]
\longrightarrow
0
\]
and \eqref{selmer-equality-in-lem} imply $\Sh(E_b/\Q(i))[1+i]=0.$ 
Since $2=-i(1+i)^2$, every element of
$\Sh(E_b/\Q(i))[2^\infty]$ is killed by some power of $1+i$.
If this group were nonzero, an element killed by a minimal positive
power of $1+i$ would give a nonzero element of
$\Sh(E_b/\Q(i))[1+i]$, a contradiction. Hence,
\[
\Sh(E_b/\Q(i))[2^\infty]=0.
\]
\end{proof}

\subsection{The odd local descent equations}\label{subsection-odd-local-descent}

In this subsection, we translate the local solubility conditions at the odd primes dividing $b$ into a system of linear equations over $\F_2$, yielding an upper bound for the dimension of the $[1+i]$-Selmer group in terms of a quadratic-residue Laplacian. This is a specialization of the graph-theoretic $2$-isogeny descent developed in \cite{kling2024computing}; we will provide a direct formulation adapted to the families considered here.

Every odd Gaussian prime has a unique primary associate, where
$\alpha\in\Z[i]$ is primary if
\[
\alpha\equiv1\pmod{(1+i)^3}.
\]
Let $\pi$ be an odd Gaussian prime with prime ideal
$\mathfrak p=(\pi)$. For $a\in\Q(i)^\times$, write
\[
a=\pi^{v_{\mathfrak p}(a)}u
\qquad \text{for some}\quad
u\in\Z[i]_{\mathfrak p}^\times.
\]
We let
\[
\left(\frac{a}{\pi}\right)\in\{-1,1\}
\]
denote the Gaussian Legendre symbol of the $\mathfrak p$-adic unit
part $u$. Thus
\[
\left(\frac{a}{\pi}\right)=1
\quad\Longleftrightarrow\quad
u\pmod{\mathfrak p}
\text{ is a square in }
(\Z[i]/\mathfrak p)^\times.
\]
Equivalently, since $\mathfrak p$ has odd residue characteristic,
\[
\left(\frac{a}{\pi}\right)=1
\quad\Longleftrightarrow\quad
u\in(\Q(i)_{\mathfrak p}^\times)^2.
\]
For the rest of the paper, we simply call this the Legendre symbol. 

For a primary odd element with prime factorization
$\rho=\pi_1\cdots\pi_r$, we extend the notation multiplicatively by
\[
\left(\frac{a}{\rho}\right)
=
\prod_{j=1}^r\left(\frac{a}{\pi_j}\right).
\]

For distinct primary
Gaussian primes $\pi,\rho$, quadratic reciprocity (see \cite[Proposition~5.1]{lemmermeyer2013reciprocity}) gives 
\[
\left(\frac{\pi}{\rho}\right)=\left(\frac{\rho}{\pi}\right). 
\] 
We will also use the supplementary laws (equivalent to those in \cite[Proposition~5.1]{lemmermeyer2013reciprocity})
\begin{align}\label{eq-quadratic-supplementary}
\left(\frac{i}{\pi}\right)&=(-1)^{(1-a)/2},\\
\left(\frac{1+i}{\pi}\right)&=(-1)^{(a-c-c^2-1)/4},
\end{align}
for a primary prime $\pi=a+ci$. For every primary odd Gaussian prime $\pi$, define
\[
\nu_\pi:=\log_{-1}\left(\frac{i}{\pi}\right) \in \F_2,  
\]
where $\log_{-1}:\{\pm1\}\to\F_2$ is defined by
$\log_{-1}(1)=0$ and $\log_{-1}(-1)=1$.
Equivalently,
\[
\nu_\pi=
\begin{cases}
0 &\text{if }\pi\equiv1\pmod 4,\\
1 &\text{if }\pi\equiv3+2i\pmod 4.
\end{cases}
\]

Given distinct primary primes $\pi_1,\dots,\pi_N$, define the binary
\textit{quadratic-residue Laplacian}
\[
L \in\operatorname{Mat}_{N\times N}(\F_2)
\]
by
\begin{align}\label{eq-L-definition}
L_{j,k} = \begin{cases}
    \log_{-1}\left(\frac{\pi_k}{\pi_j}\right) &\text{if }j \neq k,\\[.5em]
    \sum_{l\neq j}L_{j,l} &\text{if }j = k.
\end{cases}
\end{align}
Equivalently, $L$ is the Laplacian over $\F_2$ of the graph with vertices $\{\pi_1, \dots, \pi_N\}$, where $\pi_j$ and $\pi_k$ are adjacent precisely when  
\[
\left(\frac{\pi_k}{\pi_j}\right)=-1.
\]
We call this graph the \textit{quadratic-residue graph}. By quadratic reciprocity, $L$ is symmetric, and by construction $L\one=0$.

For a nonzero prime ideal $\mathfrak p\subset\Z[i]$, let
$v_{\mathfrak p}$ denote the normalized valuation on
$\Q(i)$, so that $v_{\mathfrak p}(\pi)=1$ whenever
$\mathfrak p=(\pi)$.

\begin{prop}[Odd local descent equations]\label{prop-selmer-linear-system}
Let
\[
b=i^{s_b}\pi_1\cdots\pi_N
\qquad \text{with}\quad 
s_b\in\{0,1,2,3\},
\]
where the $\pi_j$ are distinct primary odd Gaussian primes with corresponding prime ideals $\mathfrak p_j = (\pi_j)$, and define the vector 
\[
\nu=(\nu_{\pi_1},\dots,\nu_{\pi_N})^T \in \F_2^N.
\]
Then every class in $\Sel_{1+i}(E_b/\Q(i))$ has a square-free
representative
\[
d=i^s\prod_{j=1}^N\pi_j^{x_j},
\qquad
s\in\F_2,\quad
x=(x_j)\in\F_2^N,
\]
satisfying
\begin{align}\label{eq-selmer-squarefree-system}
\bigl(L+\overline{s_b}\operatorname{diag}(\nu)\bigr)x=s\nu,
\end{align}
where $\overline{s_b} \in \F_2$ is the reduction of $s_b$ modulo $2$. Consequently, the map
\[
[d]=\bigg[i^s\prod_{j=1}^N\pi_j^{x_j}\bigg]
\longmapsto
(s,x)\in\F_2^{N+1},
\]
identifies $\Sel_{1+i}(E_b/\Q(i))$ with a subspace of
\[
\left\{
(s,x)\in\F_2\times\F_2^N:
\bigl(L+\overline{s_b}\operatorname{diag}(\nu)\bigr)x=s\nu
\right\}. 
\]
\end{prop}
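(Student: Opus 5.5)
We first pin down the possible square-free representatives of a Selmer class, and then impose local solubility at each odd prime $\pi_j$. The standard $\varphi$-descent says a class $[d]$ lies in the local image at $v$ exactly when the homogeneous space
\[
C_d:\; dw^2 = d^2 - 4b\, z^4
\]
(equivalently $N^2=d\,M^4-4(b/d)e^4$, up to the normalization coming from $\psi_b$) has a $\Q(i)_v$-point. At any odd prime $\mathfrak p\nmid b$ the curve has good reduction and the local image is the unramified classes. Hence every Selmer class is unramified outside $\{\mathfrak p_1,\dots,\mathfrak p_N,(1+i)\}$.

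Since $\Z[i]$ has class number one and unit group $\langle i\rangle$, every class therefore has a square-free representative $d=\pm(1+i)^{e}i^{s}\prod\pi_j^{x_j}$. To remove the factor $(1+i)$, I would use the infinite place together with the place above $2$, or alternatively argue that we only claim a subspace identification. Since the statement asserts the representative has no $(1+i)$ factor, this must come from local analysis at $(1+i)$: show that $[d]$ with $e=1$ is not in the local image at $(1+i)$. I expect this to be the main obstacle.

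Two things make it tractable. First, $b$ is a unit times a product of primary primes, so $b\equiv i^{s_b}\pmod{(1+i)^3}$. Second, a point of $C_d$ over $\Q(i)_{(1+i)}$ forces valuation parity constraints. My first attempt would be a valuation argument: $v(4b)=4$ is even while $v(d)$ would be odd, which should force $d\cdot$(square) $\equiv$ something incompatible. If that fails, I would fall back on the local image having size $\#E(\Q_2(i))/(1+i)$, computed via Tate's algorithm. With that, the map $[d]\mapsto(s,x)$ is injective, since distinct square-free representatives give distinct squareclasses.

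Next comes the local condition at $\pi_j$. I split into the cases $x_j=0$ and $x_j=1$. For $x_j=0$, $d$ is a $\mathfrak p_j$-unit. A point on $N^2=dM^4-4(b/d)e^4$ with $v(b/d)=1$ exists iff $d$ is a square mod $\pi_j$: by Hensel if it is, and by parity of valuations if not. This gives $\log_{-1}(d/\pi_j)=0$. For $x_j=1$, replace $d$ by the complementary divisor $-4b/d$, using the fact that the product of the two classes is the image of $P_0$. This gives $\log_{-1}\bigl((-b/d)/\pi_j\bigr)=0$; note $-1=i^2$ is a square.

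Both conditions are then expanded linearly. Multiplicativity gives $\log_{-1}(i^s\prod_k\pi_k^{x_k}/\pi_j)=s\nu_j+\sum_{k\ne j}L_{jk}x_k$. In the case $x_j=1$, the $(b/d)$-condition gives $s_b\nu_j+s\nu_j+\sum_{k\neq j}L_{jk}(1+x_k)=0$. Here the $i^{s_b}$ contributes $\overline{s_b}\nu_j$, and $\sum_{k\ne j}L_{jk}=L_{jj}$.

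The two cases combine uniformly into $\sum_k L_{jk}x_k+\overline{s_b}\nu_jx_j=s\nu_j$. Checking this is routine: when $x_j=0$ the diagonal terms vanish, and when $x_j=1$ we add the $L_{jj}$ identity. This is row $j$ of \eqref{eq-selmer-squarefree-system}. Finally, linearity of the conditions in $(s,x)$ gives the subspace statement.
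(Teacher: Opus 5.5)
\textbf{Gap: the $(1+i)$ step.} Your treatment of the odd primes is correct, but you never actually prove that the $(1+i)$ factor drops out. You flag it as the main obstacle and hedge between strategies. This step cannot be bypassed. The statement asserts representatives of the form $i^s\prod_j\pi_j^{x_j}$, and without that the map $[d]\mapsto(s,x)$ is not even defined on the Selmer group. Two of your suggested routes do not help: the complex place imposes no condition, and knowing only the size of the local image at $(1+i)$ would not tell you which classes lie in it.

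Your first idea does work, and it needs no congruence. Write $v=v_{1+i}$ and suppose $v(d)=1$. Since $v(b)=0$ and $v(4)=4$, the two terms on the right of $dw^2=d^2-4bz^4$ have valuations $2$ and $4+4v(z)$. These are distinct even numbers, so the right side is nonzero of even valuation, while the left side has odd valuation $1+2v(w)$. A point at infinity, $dW^2=-4bZ^4$, would make $b/d$ a square in $\Q(i)_{1+i}$, which is impossible since $v(b/d)=-1$. This is exactly how the paper removes $(1+i)$.

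\textbf{Odd primes.} Your case $x_j=0$ matches the paper. For $x_j=1$ you take a genuinely different route.
\begin{enumerate}[label=(\roman*)]
\item The paper divides the equation by $\pi_j$ and runs through the cases $v_{\mathfrak p_j}(z)>0$, $v_{\mathfrak p_j}(z)=0$, $v_{\mathfrak p_j}(z)<0$ and the points at infinity, arriving at $\bigl(\frac{b/d}{\pi_j}\bigr)=1$.
\item You instead translate by $[b]$, the local Kummer image of $P_0$. Since the local image is a subgroup containing $[b]$, the class $[d]$ lies in it if and only if $[b/d]$ does. Now $b/d$ is a $\mathfrak p_j$-unit with $v_{\mathfrak p_j}\bigl(b/(b/d)\bigr)=1$, so your unit-case argument applies verbatim.
\end{enumerate}
Your version is shorter and reuses a single computation. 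The paper's version is self-contained at the level of the equation and never invokes the group structure of the local image. Both give the same condition, and the linear algebra combining the two cases into row $j$ of $\bigl(L+\overline{s_b}\operatorname{diag}(\nu)\bigr)x=s\nu$ is identical.
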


\begin{proof} By the identification
\eqref{eq-selmer-phi-equals-one-plus-i} and \cite[Section~X.4]{Sil},
every $[1+i]$-Selmer class is represented by a square-free
divisor $d$ of $2b$, modulo squares. Moreover,
\cite[Section~X.4]{Sil} associates to $d$ the homogeneous space
\begin{align}\label{eq-homogeneous-space}
C_d:\quad
dw^2=d^2-4bz^4.
\end{align}
Its weighted-projective closure in $\mathbb P(2,1,1)$ is given by
\[
dW^2=d^2T^4-4bZ^4.
\]
At a point at infinity, $T=0$, so $dW^2=-4bZ^4.$ Since $Z\neq0$ and $-4$ is a square in $\Q(i)$, a point at infinity
over a completion $F$ exists if and only if $b/d$ is a square in $F$.
 
Since $\Z[i]$ has class number $1$, a square-free representative has
the form
\[
d=i^s(1+i)^t\prod_{j=1}^N\pi_j^{x_j},
\qquad
s,t,x_j\in\F_2.
\]

Suppose that $t=1$. A point at infinity on $C_d$ would require $b/d$ to be a square in $\Q(i)_{1+i}$, which is
impossible because
\[
v_{1+i}(b/d)=-1.
\]
An affine point also leads to a contradiction, since the two terms on the right side of
\eqref{eq-homogeneous-space} have distinct even valuations
\[
2
\qquad\text{and}\qquad
4+4v_{1+i}(z),
\]
while the left side has odd valuation
\[
1+2v_{1+i}(w).
\]
Thus, no affine point exists either, so we must have $t=0$.

Now fix $j\in\{1,\dots,N\}$ and let
\[
\pi=\pi_j,\qquad
\mathfrak p=\mathfrak p_j,\qquad
\Pi=\prod_{k=1}^N\pi_k.
\]

If $x_j=0$, then $d$ is a $\mathfrak p$-adic unit and $v_{\mathfrak p}(b/d)=1,$ so there is no point at infinity over $\Q(i)_{\mathfrak p}$. Thus, any local point is a solution $(z,w)$ to the affine equation \eqref{eq-homogeneous-space}. If
$v_{\mathfrak p}(z)<0$, the second term on the right side of
\eqref{eq-homogeneous-space} has odd valuation
$1+4v_{\mathfrak p}(z)$, while the left side has even valuation.
Hence, $v_{\mathfrak p}(z)\geq 0$. This implies $d \equiv w^2 \pmod{\p}$, so that
\begin{align}\label{eq-local-squarefree-case0}
\left(\frac{d}{\pi}\right)=1.
\end{align}
By the multiplicativity of the Legendre symbol, taking $\log_{-1}$ of
\eqref{eq-local-squarefree-case0} yields
\[
\sum_{k\neq j}L_{j,k}x_k
=
s\nu_{\pi_j}.
\]

Suppose instead that $x_j=1$. Let  $d_0,\Pi_0\in\Z[i]_{\mathfrak p}^\times$ such that $d = d_0 \pi$ and $\Pi = \Pi_0 \pi$. After dividing
\eqref{eq-homogeneous-space} by $\pi$, we obtain
\begin{align}\label{eq-in-proof-of-LSC}
d_0w^2
= \pi d_0^2 - 4i^{s_b} \Pi_0 z^4.
\end{align}
We claim that
\begin{align}\label{eq-local-squarefree-case1}
\left(\frac{i^{s_b} \Pi_0/d_0}{\pi}\right)=1.
\end{align}
If $v_{\mathfrak p}(z)>0$, the right side of \eqref{eq-in-proof-of-LSC} has valuation $1$, whereas
the left side has even valuation, which is impossible. If
$v_{\mathfrak p}(z)=0$, reduction modulo $\mathfrak p$ immediately
gives \eqref{eq-local-squarefree-case1}. If
$v_{\mathfrak p}(z)<0$, dividing by $z^4$ gives
\[
d_0\left(\frac{w}{z^2}\right)^2
=
\frac{\pi d_0^2}{z^4}-4i^{s_b} \Pi_0.
\]
Here $w/z^2$ is a unit, while the first term on the right is divisible
by $\mathfrak p$, so reduction modulo $\mathfrak p$ again gives
\eqref{eq-local-squarefree-case1}. Finally, at a point at infinity, we have \eqref{eq-local-squarefree-case1} because 
\[
\frac{b}{d}=\frac{i^{s_b}\Pi_0}{d_0} \in (\Q(i)_\p^\times)^2. 
\]

Note that 
\[
\frac{i^{s_b} \Pi_0}{d_0}
=i^{s_b-s}\prod_{k\neq j}\pi_k^{1-x_k},
\]
so taking $\log_{-1}$ of \eqref{eq-local-squarefree-case1} yields
\[
(\overline{s_b}+s)\nu_{\pi_j} +\sum_{k\neq j}(1-x_k)\log_{-1}\left(\frac{\pi_k}{\pi_j}\right)=0.
\]
By the definition of $L$, this is equivalent to
\[
\sum_{k\neq j}L_{j,k}(1-x_k)
+
\overline{s_b}\nu_{\pi_j}
=
s\nu_{\pi_j}.
\]
Since $L_{j,j}=\sum_{k\neq j}L_{j,k},$ 
this is in turn equivalent to 
\[
\sum_{k\neq j}L_{j,k}x_k
+
L_{j,j}
+
\overline{s_b}\nu_{\pi_j}
=
s\nu_{\pi_j}.
\]
Thus, the two cases $x_j \in \{0,1\}$ can both be written as
\[
(Lx)_j+\overline{s_b}\nu_{\pi_j}x_j
=
s\nu_{\pi_j}.
\]
Since $j$ was arbitrary, this proves
\eqref{eq-selmer-squarefree-system}. The association of a Selmer
class with $(s,x)$ is injective, so the stated dimension bound
follows.
\end{proof}

\subsection{\texorpdfstring{Torsion in the $j=1728$ family}
{Torsion in the j=1728 family}}

We now determine the $\Q(i)$-torsion subgroup of every quartic twist
$E_b$ over $\Q(i)$.\footnote{A proof of the same statement appeared in an
earlier version of \cite{savoie2025rank2}, using division polynomials.
That argument was removed from the current version to streamline the
paper. The proof here instead uses CM and a halving criterion.}

\begin{prop}\label{prop-torsion} Let $b\in\Q(i)^\times$. Then
\[
E_b(\Q(i))_{\mathrm{tors}}\cong
\begin{cases}
\Z/2\Z\oplus\Z/4\Z,
& b\in-(\Q(i)^\times)^4,\\[1ex]
(\Z/2\Z)^2,
& b\in -(\Q(i)^\times)^2
\ \text{ and }\ b\notin-(\Q(i)^\times)^4,\\[1ex]
\Z/10\Z,
& b\in(-1\pm 2i)(\Q(i)^\times)^4,\\[1ex]
\Z/2\Z,
& \text{otherwise.}
\end{cases}
\]
\end{prop}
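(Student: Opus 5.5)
Since $E_b(\Q(i))$ is a $\Z[i]$-module, the torsion subgroup $T_b$ is a finite $\Z[i]$-module. I would split it into its $(1+i)$-primary part and its odd part and treat them separately. The $(1+i)$-primary part would be computed by iterated halving with the Kummer map $\delta_b$ of \eqref{eq-kummer-x}. The odd part would be bounded by reduction modulo a few small good primes and then pinned down by an explicit computation for the CM endomorphisms $2\pm i$. Throughout I would use that $E_b$ depends only on $b$ modulo $(\Q(i)^\times)^4$. So I may assume $b\in\Z[i]$ is fourth-power free, and each of the four cases in the statement is invariant under this normalization.

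\textbf{The $(1+i)$-primary part.} Since $E_b[1+i]=\{O,P_0\}$ is cyclic of order $2$ as a $\Z[i]$-module, this part is $\Z[i]/(1+i)^k$ for some $k\ge1$. The case $k=1$ gives $\Z/2\Z$. The case $k=2$ gives $\Z[i]/(2)\cong(\Z/2\Z)^2$. The case $k=3$ gives $\Z/2\Z\oplus\Z/4\Z$. I would decide $k$ by asking whether $P_0$, and then its preimages, lie in $(1+i)E_b(\Q(i))$. By the exact Kummer sequence, $P_0\in(1+i)E_b(\Q(i))$ if and only if $\delta_b(P_0)=[b]$ is trivial. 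Composing with $[i]$ and using $2=-i(1+i)^2$, this should match the condition that $-b$ is a square, i.e.\ that $E_b[2]$ is rational. I would check directly that the roots $\pm\sqrt{-b}$ of $x^3+bx$ give the points of $E_b[2]\setminus\{O,P_0\}$, which are exactly the $(1+i)$-halves of $P_0$. Writing $-b=a^2$, the next step asks whether $Q=(a,0)$ is $(1+i)$-divisible, i.e.\ whether $[a]$ is trivial. This gives $b\in-(\Q(i)^\times)^4$. Since $i$ is a square in $\Q(i)$, the sign ambiguity in $a$ does not matter. Finally, for $b=-c^4$ I would show that the new halves $R$ of $Q$ (points of order $4$) have $x(R)=c^2(1\pm\sqrt2)$ or similar, which is non-square in $\Q(i)$. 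Hence $k=3$ exactly, and no $\Z/8$ part occurs.

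\textbf{The odd part.} An odd torsion point generates a cyclic $\Z[i]$-submodule $\Z[i]/(\alpha)$ with $\alpha$ odd. I would bound $\#T_b^{\mathrm{odd}}$ by injecting it into $\widetilde E_b(\F_{\mathfrak p})$ for two or three small primes $\mathfrak p\nmid 2b$. For primes of norm $5$, the counts for $y^2=x^3+bx$ over $\F_5$ lie in $\{2,4,8,10\}$. For $\mathfrak p=(3)$, the curve is supersingular over $\F_9$, with $\#\in\{4,8,10,16\}$. For norm $13$ I would use a short table. Since $b$ has only finitely many prime divisors, I would argue that among $(2+i),(2-i),(3),(3\pm2i)$ enough are good to force the odd part to have order dividing $5$. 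In particular a $\Z[i]/(3)$-module of order $9$, or anything of order $\ge13$, is impossible. I expect this to be the main obstacle: when $b$ is divisible by several of these small primes, one needs extra primes, or a cleaner argument via the CM Galois action. Such an argument would say that $\Q(i)$-rational $\alpha$-torsion forces $E_b[\alpha]$ to contain a trivial Galois submodule. Since the Galois action on $E_b[\alpha]$ is given by a Hecke character of conductor dividing $2b$ times a bounded factor, this forces $N\alpha-1$ to be small, which bounds $\alpha$.

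\textbf{The $5$-torsion case.} For $\alpha=2\pm i$, the kernel $E_b[\alpha]$ has order $5$, and a rational generator must be a rational root of the degree-$2$ $x$-coordinate factor of the endomorphism $[2\pm i]$. I would compute $[2+i]=[1]+[1+i]\circ\ldots$ explicitly on $y^2=x^3+bx$. Setting the numerator to zero should give $x^2=$ an explicit multiple of $b$, together with a condition that $y$ be rational. I expect this to reduce exactly to $b\in(-1\pm2i)(\Q(i)^\times)^4$. A check at $b=-1+2i$ should give a rational point of order $5$, and the combination with the $2$-part then gives $\Z/10\Z$. Lastly, one verifies that the cases are disjoint, e.g.\ that $-(-1\pm2i)$ is not a square, so that $5$-torsion and full $2$-torsion never coexist. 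This yields the four cases of Proposition~\ref{prop-torsion}.
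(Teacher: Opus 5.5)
The gap is the uniform bound on the odd part. Injecting odd torsion into $\widetilde E_b(\F_{\mathfrak p})$ only works at primes of good reduction, and $b$ is arbitrary. For $b=3\cdot5\cdot13$, none of $(2\pm i)$, $(3)$, $(3\pm2i)$ is good. Larger good primes give group orders that grow with $\operatorname{Nm}(\mathfrak p)$, so no finite list of primes handles every $b$. Your fallback (a Hecke character ``of conductor dividing $2b$'' forcing $\operatorname{Nm}(\alpha)-1$ to be small) is not an argument as stated. Nothing in the proposal rules out rational points of order $7$, $13$, $15$ or $25$, so the reduction to $\alpha=2\pm i$ is unsupported.

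The paper gets this bound from Najman's classification of torsion over $\Q(i)$, which leaves only $\ell\in\{3,5\}$ (to the first power). It then uses the CM: if $P$ has order $\ell$ and $P,[i]P$ were independent, $E_b[\ell]$ would be rational and the Weil pairing would give $\mu_\ell\subset\Q(i)$. So $[i]P=[\lambda]P$ with $\lambda^2=-1$ in $\F_\ell$. This is impossible for $\ell=3$ and gives $x(2P)=-x(P)$ for $\ell=5$; your $[2\pm i]$-kernel computation is the same step.

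To stay self-contained, you can make your CM idea precise instead:
\begin{enumerate}[label=(\alph*)]
\item A rational point $P$ of odd prime order $\ell$ makes $\Z[i]P\supseteq E_b[\alpha]$ pointwise rational for some prime $\alpha\mid\ell$ of $\Z[i]$.
\item Frobenius at a good primary prime $\pi\nmid\ell$ acts on $E_b[\alpha]$ as multiplication by $\epsilon\pi$ with $\epsilon\in\{\pm1,\pm i\}$. So every such $\pi$ is congruent to a unit modulo $\alpha$.
\item Primary primes meet every class of $(\Z[i]/\alpha)^\times$, so this forces $\#(\Z[i]/\alpha)^\times\le4$, i.e.\ $\operatorname{Nm}(\alpha)=5$.
\item The same argument with $(2\pm i)^2$ excludes $\Z/25\Z$, and the Weil pairing excludes $E_b[5]$.
\end{enumerate}

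Your $(1+i)$-primary argument is a valid and cleaner alternative to the paper's route. The paper uses the Bekker--Zarhin halving criterion for $2$-divisibility plus separate $[i-1]$ arguments against order $8$ and $\Z/4\Z\oplus\Z/4\Z$. In your route, cyclicity as a $\Z[i]$-module and injectivity of $\delta_b$ reduce everything to three squareclass checks, and $\Z/4\Z\oplus\Z/4\Z\cong\Z[i]/(4)$ needs no separate treatment. Two slips need fixing, though.
\begin{enumerate}[label=(\roman*)]
\item $i$ is \emph{not} a square in $\Q(i)$. The sign of $a$ is irrelevant because $-1=i^2$ is a square.
\item The $(1+i)$-halves of $(c^2,0)$ are rational. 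From $x([1+i]P)=(x^2+b)/(2ix)$ with $b=-c^4$ one gets $R=(ic^2,\pm(1-i)c^3)$. The values $c^2(1\pm\sqrt2)$ are the $x$-coordinates of the irrational $2$-halves, not the $(1+i)$-halves.
\end{enumerate}
The conclusion $k=3$ holds precisely because $[x(R)]=[i]$ is a nonsquare, which your earlier remark that $i$ is a square would contradict.
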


\begin{proof}
The point $P_0=(0,0)$ always has order $2$. By Najman's
classification of torsion over $\Q(i)$
\cite[Theorem~2(i)]{najman2010complete}, the torsion subgroup is one
of the groups in Mazur's theorem or $\Z/4\Z\oplus\Z/4\Z$. Since
$P_0$ is rational, it is therefore enough to determine the
$2$-primary torsion and then whether points of order $3$ or $5$
can occur.

We first determine the $2$-primary torsion. If $-b$ is not a square
in $\Q(i)$, then
\[
E_b(\Q(i))[2]=\{O,P_0\}.
\]
There is no rational point $Q$ of order $4$. Indeed, we would then
have $2Q=P_0$, and $R:=[i-1]Q$ would be a rational point of order $2$, since
\[
2R=[i-1]P_0=O.
\]
Moreover, $R\neq O$ since
$\ker[i-1]=\{O,P_0\}$, while $R\neq P_0$ since otherwise
$[i+1]Q=O$, contradicting
$\ker[i+1]=\{O,P_0\}$. Thus, $R$ would be a second nonzero rational
point of order $2$, contradicting
\[
E_b(\Q(i))[2]=\{O,P_0\}.
\]
Hence, the $2$-primary torsion is $\Z/2\Z$ if
$-b\notin(\Q(i)^\times)^2$.

Suppose now that $-b = a^2$ for some $a\in\Q(i)^\times$. Then
\[
E_b(\Q(i))[2]
=
\{O,P_0,(a,0),(-a,0)\}.
\]
By the halving criterion of
\cite[Theorem~2.1]{bekker2018division}, a $\Q(i)$-point $(x_0, y_0)$ lies in $2 E_b (\Q(i))$ if and only if $x_0$, $x_0+a$, and $x_0-a$ are all
squares in $\Q(i)$. It follows that $P_0 \in 2 E_b(\Q(i))$ if and only if $b \in - (\Q(i)^\times)^4.$ Moreover, neither $(a,0)$ nor $(-a,0)$ is
divisible by $2$, since the halving criterion would then imply that $2$ is
a square in $\Q(i)$. Thus, the $2$-primary torsion is
$(\Z/2\Z)^2$ if $b\notin-(\Q(i)^\times)^4$.

Now suppose that $b=-c^4$ for some $c \in \Q(i)^\times$. Then $P_0$ is divisible by $2$, so the $2$-primary torsion contains
$\Z/2\Z\oplus\Z/4\Z$. We claim that it cannot contain a point $Q$ of order $8$.
Indeed, setting $A=2Q$, the point $4Q=2A$ is a nonzero rational
$2$-torsion point divisible by $2$, and must therefore equal $P_0$.
As before,
\[
R=[i-1]A
\]
is a nonzero rational $2$-torsion point distinct from $P_0$. But
\[
R=2[i-1]Q,
\]
so $R$ is also divisible by $2$, contradicting the halving criterion.
The possibility $\Z/4\Z\oplus\Z/4\Z$ is similarly excluded, since it
would make a nonzero $2$-torsion point other than $P_0$ divisible by
$2$. Hence, the $2$-primary torsion is exactly $\Z/2\Z\oplus\Z/4\Z$ if $b \in - (\Q(i)^\times)^4.$

It remains to consider odd torsion. By Najman's classification, only
orders $3$ and $5$ need be considered. Let $P\in E_b(\Q(i))$ 
have prime order $\ell\in\{3,5\}$. If $P$ and $[i]P$ were linearly
independent over $\F_\ell$, then all of $E_b[\ell]$ would be rational
over $\Q(i)$, and the Weil pairing would imply $\mu_\ell\subset\Q(i),$ 
which is impossible. 
Hence,
\[
[i]P=[\lambda]P
\qquad\text{for some}\quad
\lambda\in\F_\ell^\times.
\]
Since $[i]^2=[-1]$, we then have $\lambda^2 = -1$ in $\F_\ell$.  
For $\ell=3$ this has no solution, so $E_b(\Q(i))$ has no nonzero
$3$-torsion.

For $\ell=5$, we have $\lambda=\pm2$, and therefore
\[
x(2P)=x([i]P)=-x(P).
\]
Writing $P=(x,y)$, the duplication formula yields
\[
x(2P)
=\frac{(x^2-b)^2}{4x(x^2+b)}.
\]
Hence, 
\begin{align}\label{5-torsion-quartic-eqn}
5x^4+2bx^2+b^2=0.
\end{align}
Since $x\neq0$, we can define $u:= b / x^2 \in \Q(i)$. Then \eqref{5-torsion-quartic-eqn} is quadratic in $u$ and we obtain $u = -1 \pm 2i$. If $z:=y/x$, then
\[
z^2
=x+\frac{b}{x}
=x(1+u)=
\pm 2ix.
\]
Thus, $x^2 = -z^4 / 4$, and consequently
\[
b
=(-1 \pm 2i)
\left(\frac{z}{1+i}\right)^4.
\]
Therefore, $5$-torsion can occur only if $b \in (-1 \pm 2i) (\Q(i)^\times)^4$.  

Conversely, on $E_{-(1+2i)}$ the point $P=(-1,1+i)$ 
satisfies $[2]P=[i]P$ by the duplication formula. Hence, 
$[2-i]P=O$, and multiplying by $[2+i]$ gives $[5]P=O$. Since
$P\neq O$, it has order $5$. By conjugation,
$E_{-(1-2i)}$ likewise has a rational point of order $5$. Every curve
in either quartic-twist class is $\Q(i)$-isomorphic to one of these
two curves. In these classes $-b$ is not a square, so the
$2$-primary torsion is $\Z/2\Z$. Najman's classification therefore
gives
\[
E_b(\Q(i))_{\mathrm{tors}}\cong\Z/10\Z.
\]

Combining the preceding cases proves the proposition.
\end{proof}

\subsection{Square-split points and support patterns}

We formalize the square-split construction described in the
introduction. The basic point-producing identity is the following.

\begin{lem}\label{lem-factorization-point}
Let $b \in \Q(i)^\times$ and $e,f,R\in\Q(i)$ such that $ef = b$ and $e + f = R^2$. Then  
\[
P=(e,eR)\in E_b(\Q(i))
\qquad\text{and}\qquad
\delta_b(P)=[e].
\]
Conversely, if $P=(x,y)\in E_b(\Q(i))$ with $x\neq0$, then setting
\[
e=x,\qquad f=\frac{b}{x},\qquad R=\frac{y}{x}
\]
gives $ef=b$ and $e+f=R^2$.
\end{lem}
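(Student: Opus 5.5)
The proof is a direct verification in both directions, followed by a reading-off of the Kummer class from \eqref{eq-kummer-x}.

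For the forward direction, we substitute $P=(e,eR)$ into $y^2=x^3+bx$ and use $b=ef$ and $R^2=e+f$:
\[
x^3+bx=e^3+efe=e^2(e+f)=e^2R^2=(eR)^2 .
\]
Hence $P\in E_b(\Q(i))$. Since $ef=b\neq0$, we have $e\neq0$. Thus $P$ is neither $O$ nor $P_0$, and the third case of \eqref{eq-kummer-x} applies with $F=\Q(i)$, giving $\delta_b(P)=[x(P)]=[e]$.

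For the converse, let $P=(x,y)\in E_b(\Q(i))$ with $x\neq0$, and set $e=x$, $f=b/x$, $R=y/x$, all of which lie in $\Q(i)$ because $x\neq0$. Then $ef=b$ immediately. Dividing the curve equation $y^2=x^3+bx$ by $x^2$ gives
\[
\left(\frac{y}{x}\right)^2=x+\frac{b}{x},
\]
which is exactly $R^2=e+f$.

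No step is a genuine obstacle. The only points needing care are checking that $e\neq0$, so that the generic case of the Kummer formula \eqref{eq-kummer-x} applies rather than the $P_0$ case, and noting that division by $x$ is legitimate in the converse.
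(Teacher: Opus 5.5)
Your proposal is correct and follows the paper's proof, which simply states that the lemma is immediate from the curve equation and the Kummer formula \eqref{eq-kummer-x}. You have written out that substitution explicitly, and you also note that $e\neq 0$ (because $ef=b\neq 0$), which is exactly what is needed to apply the generic case $x\neq0$ of the Kummer map.
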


\begin{proof}
This follows immediately from the equation defining $E_b$ and
\eqref{eq-kummer-x}.
\end{proof}

Let $B=u\ell_1\cdots\ell_n$ for some $u \in \Q(i)^\times$, where the $\ell_j$ are nonzero homogeneous binary linear forms. A
\emph{square-split factorization} of $B$ is a factorization $B=ef$ 
of the form
\[
e=c\prod_{j\in S}\ell_j,
\qquad
f=u c^{-1} \prod_{j\notin S}\ell_j,
\qquad
c\in\Q(i)^\times,
\]
for which $e+f=R^2$ with $R$ a nonzero homogeneous form.

\begin{lem}\label{lem-square-split-factor-count}
If $B$ admits a square-split factorization, then $4\mid n.$ 
More precisely,
\[
|S|=n-|S|=\frac n2,
\qquad
\deg R=\frac n4.
\]
\end{lem}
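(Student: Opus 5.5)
The proof is a degree count in the graded ring $\Q(i)[X,Y]$ of binary forms. Put $k=|S|$. Then $e=c\prod_{j\in S}\ell_j$ is homogeneous of degree $k$ and $f=uc^{-1}\prod_{j\notin S}\ell_j$ is homogeneous of degree $n-k$. The definition requires $R$ to be a nonzero homogeneous form, say of degree $m$. Then $R^2$ is a nonzero homogeneous form of degree $2m$, because $\Q(i)[X,Y]$ is an integral domain.

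The key step is to show that $k=n-k$. Suppose instead that $k\neq n-k$. Then $e$ and $f$ are nonzero homogeneous forms of different degrees, so $e+f$ has two nonzero homogeneous components, one in degree $k$ and one in degree $n-k$; no cancellation between them is possible. A nonzero homogeneous form $R^2$ has exactly one nonzero homogeneous component. So $e+f=R^2$ is impossible, and therefore $k=n-k=n/2$. In particular $n$ is even.

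Now $e+f$ is homogeneous of degree $n/2$, and it equals the nonzero form $R^2$, so $e+f\neq 0$. Comparing degrees gives $2m=n/2$, that is, $\deg R=m=n/4$. Since $m$ is a nonnegative integer, this forces $4\mid n$.

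I do not expect a real obstacle. The only point that needs care is the nonvanishing of $e+f$, and this comes directly from the requirement in the definition that $R\neq0$. Without that requirement, $e+f=0$ could occur, for example when $f=-e$, and then the degree of $R$ would not be defined.
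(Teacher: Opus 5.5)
Your proof is correct and takes the same approach as the paper: compare the degrees of the homogeneous forms $e$, $f$, and $R^2$ to obtain $|S| = n-|S| = n/2 = 2\deg R$. You additionally write out two points the paper's one-line argument leaves implicit: nonzero homogeneous components of different degrees cannot cancel, and the hypothesis $R\neq 0$ makes $R^2$ a nonzero form.
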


\begin{proof}
The two nonzero homogeneous forms $e$ and $f$ can sum to the
homogeneous form $R^2$ only if they have the same degree. Thus,
\[
|S|=n-|S|=\frac n 2 = 2 \deg R.
\] 
\end{proof}

For a subset $S\subseteq\{1,\dots,n\}$, write
$\mathbf 1_S\in\F_2^n$ for its characteristic vector. We use \emph{weight} to mean Hamming weight for vectors in $\F_2^n$.
A partition is called \textit{balanced} if its two parts have equal cardinality. Complementary subproducts determine the same partition,
while their support vectors differ by the all-ones vector
\[
\mathbf 1=(1,\dots,1).
\]
After specialization to distinct prime factors, $\mathbf 1$
is the support vector of the Kummer class $[b]$ of the torsion point $(0,0)$. For vectors $v_1,\dots,v_r\in\F_2^n$, we write
$\langle v_1,\dots,v_r\rangle$ for their $\F_2$-span.

\begin{lem}\label{lem-square-split-supports}
The following hold.
\begin{enumerate}[label=(\roman*)]
\item For $n=4$, any two distinct balanced partitions are equivalent,
up to relabeling the factors, to
\[
\{1,2\}\sqcup\{3,4\},
\qquad
\{1,3\}\sqcup\{2,4\}.
\]
The support vectors of all balanced partitions span the
$3$-dimensional even-weight subspace of $\F_2^4$. Modulo
$\langle\mathbf 1\rangle$, their images span a $2$-dimensional space.

\item Let $S_1,S_2,S_3\subseteq\{1,\dots,8\}$ have cardinality $4$, and
suppose
\[
C=\langle\one,\mathbf 1_{S_1},\mathbf 1_{S_2},\mathbf 1_{S_3}\rangle
\subseteq\F_2^8
\]
has dimension $4$. If $C$ is self-dual with respect to the standard
dot product, then, after relabeling the eight coordinates, the
coordinates may be indexed by $\F_2^3$ so that
\[
S_j=\{v\in\F_2^3:v_j=0\},
\qquad j=1,2,3.
\]
In particular, $C=\mathrm{RM}(1,3)$.
\end{enumerate}
\end{lem}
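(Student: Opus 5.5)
For part (i), I would argue directly. A balanced partition of $\{1,2,3,4\}$ is determined by the partner of $1$, so there are exactly three of them:
\[
\{1,2\}\sqcup\{3,4\},\qquad \{1,3\}\sqcup\{2,4\},\qquad \{1,4\}\sqcup\{2,3\}.
\]
Two distinct ones can therefore be relabeled to the stated pair. To see this, label so that the first is $\{1,2\}\sqcup\{3,4\}$. The second then pairs $1$ with $3$ or with $4$, and in the latter case swapping $3$ and $4$ reduces to the stated form.

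Next I would handle the span. The support vectors are the six weight-$2$ vectors, which lie in the even-weight subspace $\mathbf 1^\perp$ of dimension $3$. The three vectors $\mathbf 1_{\{1,2\}}$, $\mathbf 1_{\{1,3\}}$, $\mathbf 1_{\{1,4\}}$ are visibly independent, so the span is exactly $\mathbf 1^\perp$. Since $\mathbf 1$ has even weight, it lies in this subspace, and the quotient by $\langle\mathbf 1\rangle$ has dimension $2$.

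For part (ii), the main step is to extract the intersection numbers from self-duality. Since $C$ has dimension $4$ in $\F_2^8$, self-duality gives $|S_j\cap S_k|\equiv 0\pmod 2$ for $j\ne k$. This holds for the complements as well, and also for each of $S_j\triangle S_k$ and $S_1\triangle S_2\triangle S_3$, because every vector of $C$ must have even weight, and in fact doubly-even structure is not needed.

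I then plan to show $|S_j\cap S_k|=2$. The intersection size is $0$, $2$ or $4$. If it were $4$, then $S_j=S_k$. If it were $0$, then $S_k$ would be the complement of $S_j$, so $\mathbf 1_{S_k}=\mathbf 1+\mathbf 1_{S_j}$. Either case contradicts $\dim C=4$, so $|S_j\cap S_k|=2$.

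For the triple intersection, let $a=|S_1\cap S_2\cap S_3|$. I would not rely on self-duality here; instead I would count via the eight atoms. For $\epsilon\in\F_2^3$, let $A_\epsilon$ be the set of coordinates $i$ with $i\in S_j$ exactly when $\epsilon_j=0$. The pairwise intersections force $|A_{000}|+|A_{001}|=2$, and similarly for the other pairs. Each $|S_j|=4$, and the total count is $8$. Inclusion–exclusion gives
\[
|S_1\cup S_2\cup S_3| = 12-6+a = 6+a,
\]
so $|A_{111}|=2-a$. Each pairwise-only atom has size $2-a$, and each single atom has size $4-2(2-a)-a=a$.

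It remains to pin down $a$ using the inner products with $S_1\triangle S_2\triangle S_3$, which lies in $C$. Its weight is the sum of the single atoms and the triple atom, namely $3a+a=4a$, so this condition is automatic. Instead I would use $\mathbf 1_{S_1\triangle S_2}\cdot \mathbf 1_{S_3}\equiv 0$. The set $S_1\triangle S_2$ is the union of the atoms in exactly one of $S_1,S_2$, and its intersection with $S_3$ consists of the two pairwise atoms $\{1,3\}$ and $\{2,3\}$, of size $2(2-a)$, which is even. This is also automatic.

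So parity alone does not force $a=1$. For instance $a=0$ gives the atom sizes $2,2,2,0,0,0,0,2$, i.e. $S_1=X\cup Y$, $S_2=X\cup Z$, $S_3=Y\cup Z$ with $|X|=|Y|=|Z|=|W|=2$. In that case $\mathbf 1_{S_1}+\mathbf 1_{S_2}+\mathbf 1_{S_3}=0$, contradicting $\dim C=4$. Similarly $a=2$ gives $\mathbf 1_{S_1}+\mathbf 1_{S_2}+\mathbf 1_{S_3}=\mathbf 1$, which is again a dependency. Hence $a=1$, and every atom has size exactly $1$.

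Finally, I would index the coordinate in $A_\epsilon$ by $\epsilon\in\F_2^3$. Then $S_j=\{\epsilon:\epsilon_j=0\}$, and $C$ is spanned by $\mathbf 1$ and the three coordinate hyperplanes, which is $\mathrm{RM}(1,3)$. The main obstacle is the triple-intersection case analysis, and there I would use independence rather than self-duality to rule out $a\in\{0,2\}$.
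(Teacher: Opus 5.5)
Your proposal is correct and follows essentially the same route as the paper: self-duality gives even pairwise intersections, independence rules out sizes $0$ and $4$, the atom sizes are all expressed in terms of the triple intersection $a$, and independence (via $\mathbf 1_{S_1}+\mathbf 1_{S_2}+\mathbf 1_{S_3}\in\{0,\mathbf 1\}$) rules out $a\in\{0,2\}$. The differences are cosmetic: you index atoms so that membership in $S_j$ corresponds to $\epsilon_j=0$, which avoids the paper's final translation by $(1,1,1)$, and your check of further parity conditions before invoking independence is unnecessary but harmless.
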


\begin{proof}
For (i), a balanced partition of four factors is represented by a
subset of cardinality 2. Any two distinct such partitions
have representatives meeting in one element, and a relabeling gives the desired two partitions. The characteristic vectors of the subsets
of cardinality 2 are precisely the weight $2$ vectors in $\F_2^4$, and these span the even-weight subspace.

For (ii), self-duality implies that the four displayed generators are
pairwise orthogonal. Since each $S_j$ has cardinality $4$, this gives
\[
|S_j\cap S_k|\equiv0\pmod2
\qquad \text{for} \quad j \neq k.
\]
The intersection cannot have size $0$ or size $4$ by the linear independence of the vectors $\mathbf 1, \mathbf 1_{S_j}$, and $\mathbf 1_{S_k}$. Hence,  
\[
|S_j\cap S_k|=2
\qquad \text{for} \quad j\neq k.
\]

For $a=(a_1,a_2,a_3)\in\F_2^3$, let $n_a$ denote the number of
$r\in\{1,\dots,8\}$ such that
\[
\bigl(\mathbf 1_{S_1}(r),\mathbf 1_{S_2}(r),\mathbf 1_{S_3}(r)\bigr)
=(a_1,a_2,a_3).
\] 
The conditions $|S_j|=4$ and $|S_j\cap S_k|=2$ imply
\[
n_{110}=n_{101}=n_{011}=2-n_{111}.
\]
Using $|S_j|=4$ then gives
\[
n_{100}=n_{010}=n_{001}=n_{111}.
\]
Finally, since the eight membership counts sum to $8$, we have $n_{000} = 2 - n_{111}$.  
Thus, every $n_a$ is determined by $n_{111}$. If $n_{111} = 0$, then
\[
\mathbf 1_{S_1}
+
\mathbf 1_{S_2}
+
\mathbf 1_{S_3}
=0,
\]
while if $n_{111} = 2$, then
\[
\mathbf 1_{S_1}
+
\mathbf 1_{S_2}
+
\mathbf 1_{S_3}
=\mathbf 1.
\]
Both contradict linear independence. 

Thus $n_{111}=1$, and so $n_a=1$ for every $a\in\F_2^3$.
Therefore, the map
\[
r\longmapsto
\bigl(\mathbf 1_{S_1}(r),\mathbf 1_{S_2}(r),\mathbf 1_{S_3}(r)\bigr)
\]
is a bijection from $\{1,\dots,8\}$ onto $\F_2^3$. With respect to this labeling,
\[
S_j=\{v\in\F_2^3:v_j=1\},
\qquad j=1,2,3.
\]
Translating all labels by $(1,1,1)$ gives another labeling of the eight coordinates, under which
\[
S_j=\{v\in\F_2^3:v_j=0\},
\qquad j=1,2,3.
\]
The vectors $\mathbf 1_{S_j}$ are then the evaluation vectors of the affine-linear functions $1+x_j$. Hence, 
\[
C=\langle 1,1+x_1,1+x_2,1+x_3\rangle
=\mathrm{RM}(1,3).
\]
\end{proof}

Lemma~\ref{lem-square-split-factor-count} makes four factors the first
possible square-split construction. Part~(i) of
Lemma~\ref{lem-square-split-supports} gives the two support vectors
used in the rank-$4$ construction. At the level of factor supports,
four factors cannot supply a third independent class modulo the
torsion support. Since the number of factors is divisible by $4$, the next possibility is eight factors. For the rank-$6$ construction
we impose the self-duality condition in part~(ii), which determines the
support pattern up to relabeling and yields the three affine coordinate
hyperplanes used. 

The self-duality condition is motivated by the descent. For our
rank-$6$ family we have $s_b=0$, so the odd local conditions of
Proposition~\ref{prop-selmer-linear-system} take the form $Lx=s\nu.$ 
For the three constructed points and $(0,0)$, the corresponding
Kummer classes all have $s=0$. Since these are Selmer classes, their support vectors satisfy $Lx=0$. Hence, if $C$ denotes the span of
these four support vectors, then $C\subseteq\ker L.$ 

Since $C=C^\perp$, we have $\dim C=4$. If
$\operatorname{rank}L=4$, then rank-nullity gives
$\dim\ker L=4$, and therefore $\ker L=C.$ 
Since $L$ is symmetric,
\[
\operatorname{im}L =\operatorname{row}(L) =(\ker L)^\perp =C^\perp =C.
\]
The full system $Lx=s\nu$ can then be read directly from $C$: for
$s=0$ its solutions are exactly the vectors $x\in C$, while for
$s=1$ a solution exists if and only if $\nu\in C$. In the rank-$6$
construction we will show that $\nu\notin C$. Thus,
Proposition~\ref{prop-selmer-linear-system} embeds the Selmer group
into the four-dimensional space
\[
\{0\}\times C\subseteq\F_2\times\F_2^8.
\]
Since the four Kummer classes above are linearly independent, it follows
that
\[
\dim_{\F_2}\Sel_{1+i}(E_b/\Q(i))=4,
\]
which is the sharp Selmer bound needed for the rank-$6$ calculation.

Once the support patterns are fixed, the remaining task is to realize them
by explicit square-split identities. Writing
\[
\ell_j=a_jX+b_jY
\]
and taking a general quadratic form for each $R_k$, the prescribed
identities become a finite system of polynomial equations obtained by
comparing coefficients. The linear forms in
Sections~\ref{sec-rank4} and~\ref{sec-rank6} give explicit solutions
to these systems. Their coefficients are not canonical; the intrinsic features of the
construction are the factor counts and the resulting support patterns.

\subsection{Projective parameters and the associated elliptic surfaces}\label{subsec-projective}
Let $B(X,Y)\in\Q(i)[X,Y]$ 
be homogeneous of degree $4m$, and set
\[
b(T)=B(T,1).
\]
For $\alpha,\beta\in\Q(i)$ with $\beta\neq0$ and
$B(\alpha,\beta)\neq0$, set $t=\alpha/\beta$. By homogeneity,
\[
B(\alpha,\beta)=\beta^{4m}b(t)=(\beta^m)^4b(t).
\]
It follows that the specialized curves are $\Q(i)$-isomorphic:
\begin{align}\label{eq-projective-isomorphism}
E_{B(\alpha,\beta)}
&\xrightarrow{\ \sim\ }
E_{b(t)},\notag\\
(x,y)
&\longmapsto
\left(
\frac{x}{\beta^{2m}},
\frac{y}{\beta^{3m}}
\right).
\end{align}
Hence, the quartic-twist class of $E_{B(\alpha,\beta)}$ depends only on
the projective point $[\alpha:\beta]$.

The homogeneous rank-$4$ coefficient $B_4$ has degree $4$, so its
associated surface is a rational elliptic surface with four fibers of
type $\mathrm{III}$. Its generic fiber has the rational $2$-torsion
point $(0,0)$, and \cite[Corollary~2.1]{oguiso1991mordell} therefore
gives
\[
\operatorname{rank}E(\overline{\Q}(T))\leq4.
\]
The two sections constructed in Section~\ref{sec-rank4}, together with
their $i$-multiples, show that equality holds. 

In contrast, the homogeneous rank-$6$ coefficient $B_6$ has degree
$8$, so its associated surface is an elliptic K3 surface with eight
fibers of type $\mathrm{III}$. 

\subsection{Simultaneous prime values of binary linear forms}

To control the Selmer groups in our families, we need to specialize the linear factors of the twist coefficients to prime elements with prescribed local behavior. For this we use the local-approximation form of Kai's theorem
on linear patterns of prime elements \cite[Proposition~13.2]{kai2023linear}.

\begin{prop}[Linear prime specialization]\label{prop-linear-primes}
Let
\[
\ell_1(X,Y),\dots,\ell_n(X,Y)\in\Z[i][X,Y]
\]
be pairwise nonproportional nonzero homogeneous linear forms. Let
$q\in\Z[i]\setminus\{0\}$ and $(\alpha_0,\beta_0)\in\Z[i]^2$. Assume:
\begin{enumerate}[label=(\roman*)]
\item for every prime ideal $\mathfrak p\mid(q)$ and every $j$, the value
$\ell_j(\alpha_0,\beta_0)$ is a unit at $\mathfrak p$;

\item for every prime ideal $\mathfrak p\nmid(q)$, there is a pair $(\alpha_{\mathfrak p},\beta_{\mathfrak p})
\in(\Z[i]/\mathfrak p)^2$ such that
\[
\ell_1(\alpha_{\mathfrak p},\beta_{\mathfrak p})
\cdots \ell_n(\alpha_{\mathfrak p},\beta_{\mathfrak p})
\not\equiv 0 \pmod{\mathfrak p}.
\]
\end{enumerate}
Then there are infinitely many pairs $(\alpha,\beta)\in\Z[i]^2$ satisfying 
\[
(\alpha,\beta)\equiv(\alpha_0,\beta_0)\pmod q,
\]
for which $\ell_1(\alpha,\beta), \dots, \ell_n(\alpha,\beta)$ are pairwise nonassociate Gaussian primes.
\end{prop}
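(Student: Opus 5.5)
The plan is to deduce the statement from Kai's local-approximation theorem \cite[Proposition~13.2]{kai2023linear}, applied over $K=\Q(i)$ with $\mathcal O_K=\Z[i]$. We first recast the setup as an affine-linear system. Consider the map $\Psi:\Z[i]^2\to\Z[i]^n$ given by $(\alpha,\beta)\mapsto(\ell_1(\alpha,\beta),\dots,\ell_n(\alpha,\beta))$. Since the $\ell_j$ are pairwise nonproportional, no two components of $\Psi$ are affinely related. This is exactly the finite-complexity hypothesis of the Green--Tao--Ziegler/Kai framework: the forms are pairwise linearly independent over $\Q(i)$, and because they are homogeneous, the constant parts vanish.

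Next we restrict to a residue class. Write $(\alpha,\beta)=(\alpha_0,\beta_0)+q(\alpha',\beta')$ with $(\alpha',\beta')\in\Z[i]^2$. Then each $\ell_j(\alpha,\beta)=\ell_j(\alpha_0,\beta_0)+q\,\ell_j(\alpha',\beta')$ is an affine-linear form in $(\alpha',\beta')$. Its linear parts $q\ell_j$ are still pairwise nonproportional. Kai's theorem asserts that the number of $(\alpha',\beta')$ in a large dilated box $N\Omega$ (with $\Omega$ a convex body in $\C^2\cong\mathbb R^4$) for which all values are prime elements equals a product of local densities $\prod_{\mathfrak p}\beta_{\mathfrak p}$ times the archimedean volume, times $(\log N)^{-n}$, up to an error of $o(N^4(\log N)^{-n})$. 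It therefore suffices to show that every local factor $\beta_{\mathfrak p}$ is positive and that the archimedean main term is nonzero.

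The local factors split into three cases.
\begin{enumerate}[label=(\alph*)]
\item If $\mathfrak p\mid(q)$, then modulo $\mathfrak p$ every value $\ell_j(\alpha,\beta)\equiv\ell_j(\alpha_0,\beta_0)$ is a unit by hypothesis (i). Hence $\beta_{\mathfrak p}=\prod_j(1-1/N\mathfrak p)^{-1}>0$.
\item If $\mathfrak p\nmid(q)$, the substitution $(\alpha',\beta')\mapsto(\alpha,\beta)$ is a bijection modulo $\mathfrak p$. Hypothesis (ii) then supplies a residue at which all forms are nonzero, so $\beta_{\mathfrak p}>0$.
\item For all but finitely many $\mathfrak p$, the reductions of the $\ell_j$ remain pairwise nonproportional. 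For such $\mathfrak p$, the density of residues where some $\ell_j$ vanishes is $n/N\mathfrak p+O(N\mathfrak p^{-2})$, so $\beta_{\mathfrak p}=1+O(N\mathfrak p^{-2})$ and the Euler product converges.
\end{enumerate}
For the archimedean factor, we choose $\Omega$ to be a small ball around a point where no $\ell_j$ vanishes; its volume is then positive. Consequently the count tends to infinity, and infinitely many admissible $(\alpha,\beta)$ exist. Here a ``prime value'' means a value generating a prime ideal; in $\Z[i]$, which is a PID, that is a Gaussian prime.

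Finally we obtain pairwise nonassociativity. If $\ell_j(\alpha,\beta)$ and $\ell_k(\alpha,\beta)$ were associate primes $\pi$, then $\pi$ divides $\det$ of the two forms times $\alpha$ and $\beta$ (via Cramer). Taking $\Omega$ so that the norms of the values grow with $N$ forces $\pi$ to be large. An alternative is to count solutions with $\ell_j=u\ell_k$ for the four units $u$: each such equation is a proper linear subvariety, which contributes $O(N^2)=o(N^4(\log N)^{-n})$ points and can be discarded.

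The main obstacle is matching the exact normalization of \cite[Proposition~13.2]{kai2023linear}. That result is phrased for affine-linear maps $\mathcal O_K^d\to\mathcal O_K^t$, with a local condition at a modulus and a convex archimedean region, so we have to verify that our residue-class reformulation and the choice of $\Omega$ meet its hypotheses precisely. In particular we must check that hypothesis (ii) is exactly its nondegeneracy condition at primes away from $q$, while the finitely many small primes where the reductions become proportional are handled by (ii) directly.
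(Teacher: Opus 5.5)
Your argument is correct in outline, but it takes a different route from the paper's. The paper uses Kai's Proposition~13.2 as a qualitative local-approximation statement over $S$-integers, not as a counting asymptotic. It substitutes $\lambda=Y$, $\mu=X+aY$ so that $\ell_j=c_j(\lambda-e_j\mu)$. It lets $S$ consist of the primes dividing $q$ together with those where some $c_j$ is not a unit or some $e_j$ is not integral. It then obtains $(\lambda,\mu)$ with prescribed behaviour at $S$ and at the archimedean place, such that the $\lambda-e_j\mu$ are pairwise nonassociate primes of $\Z[i][S^{-1}]$.

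In that route, the congruence modulo $q$ and hypothesis (ii) enter only as approximation conditions at primes of $S$. Outside $S$, forms of this shape have no local obstruction, since $(\lambda,\mu)\equiv(1,0)$ works. The same $S$-adic conditions make $(\alpha,\beta)$ integral and the values units at every prime of $S$, which recovers Gaussian primality. Infinitude comes from moving the archimedean target $\lambda/\mu\approx k$.

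You instead encode the congruence in the affine system $(\alpha,\beta)=(\alpha_0,\beta_0)+q(\alpha',\beta')$ and apply the Green--Tao--Ziegler-type asymptotic directly. Everything then reduces to positivity of the local densities, which is exactly what (i) and (ii) assert. Infinitude and nonassociativity follow because a count of size $\gg N^4(\log N)^{-n}$ beats the $O(N^2)$ points on each line $\ell_j=u\ell_k$. Your version makes the role of the hypotheses transparent and avoids the $S$-integer bookkeeping. The paper's version needs only the qualitative corollary, which already packages nonassociativity.

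When writing it up, three points need fixing:
\begin{enumerate}[label=(\arabic*)]
\item Cite Kai's main counting theorem rather than Proposition~13.2, which as used in the paper is the qualitative $S$-integral statement.
\item That counting theorem is weighted by a von Mangoldt function supported on prime powers. You must bound each weight by $O(\log N)$, then discard the $O(N^3)$ points where some value is a unit times $\pi^k$ with $k\ge 2$. There are $O(N)$ such values of size at most $CN$, and each is attained on $O(N^2)$ points of $N\Omega$.
\item Drop your first nonassociativity argument, which proves nothing: a large prime dividing both $\alpha$ and $\beta$ would simply divide every value. Your second argument, counting points on the lines $\ell_j=u\ell_k$, is correct and suffices.
\end{enumerate}
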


\begin{proof}
Choose $a\in\Z[i]$ such that none of the $\ell_j$ is proportional to
$X+aY$, and set
\[
\lambda=Y,\qquad \mu=X+aY.
\]
Then we can write
\[
\ell_j=c_j(\lambda-e_j\mu)
\qquad\text{for some}\quad
c_j\in\Q(i)^\times,\; e_j\in\Q(i).
\]
The elements $e_1,\dots,e_n$ are distinct since the $\ell_j$ are
pairwise nonproportional.

Let $S$ be the finite set of nonzero prime ideals $\mathfrak p$ of $\Z[i]$ such that at least one of
\[
\mathfrak p\mid(q),\qquad
v_{\mathfrak p}(e_j)<0\ \text{for some }j,\qquad
v_{\mathfrak p}(c_j)\neq0\ \text{for some }j
\]
holds. Then $e_j\in\Z[i][S^{-1}]$ and $c_j\in\Z[i][S^{-1}]^\times$ for every $j$.  

We now specify the desired local data. For $\mathfrak p\mid(q)$, set
\[
(\lambda_{\mathfrak p},\mu_{\mathfrak p})
=(\beta_0,\alpha_0+a\beta_0) \in \Q(i)_\p^2. 
\]
For $\mathfrak p\in S$ with $\mathfrak p\nmid(q)$, hypothesis~(ii)
provides $(\alpha_{\mathfrak p},\beta_{\mathfrak p})
\in(\Z[i]/\mathfrak p)^2$ such that
\[
\ell_j(\alpha_{\mathfrak p},\beta_{\mathfrak p})
\not\equiv0\pmod{\mathfrak p} \qquad \text{for every}\quad j.
\]
Choose lifts $\widetilde\alpha_{\mathfrak p},
\widetilde\beta_{\mathfrak p}\in\Z[i]_{\mathfrak p}$ and set
\[
(\lambda_{\mathfrak p},\mu_{\mathfrak p})
= (\widetilde\beta_{\mathfrak p},
\widetilde\alpha_{\mathfrak p}
+a\widetilde\beta_{\mathfrak p}) \in \Q(i)_\p^2. 
\]

For each positive integer $k$, apply
\cite[Proposition~13.2]{kai2023linear} with archimedean target
$\tau_k=k\in\C$ and with $\varepsilon>0$ chosen so that
\[
\begin{aligned}
\lambda_k&\equiv\beta_0,\qquad
\mu_k\equiv\alpha_0+a\beta_0
\pmod{\mathfrak p^{v_{\mathfrak p}(q)}}
&&\text{for }\mathfrak p\mid(q),\\
\lambda_k&\equiv \beta_{\mathfrak p},\qquad
\mu_k\equiv
 \alpha_{\mathfrak p}
+a \beta_{\mathfrak p}
\pmod{\mathfrak p}
&&\text{for }\mathfrak p\in S\text{ with } \mathfrak p\nmid(q),
\end{aligned}
\]
and
\[
\left|\frac{\lambda_k}{\mu_k}-k\right|<\frac12.
\]
The proposition yields for each $k \in \Z_{\geq 1}$ a pair $(\lambda_k,\mu_k)\in
(\Z[i][S^{-1}]\setminus\{0\})^2$ with these properties such that the elements
$\lambda_k-e_1\mu_k,\dots,\lambda_k-e_n\mu_k$ are pairwise
nonassociate primes in $\Z[i][S^{-1}]$.

Set $(\alpha_k,\beta_k)
=(\mu_k-a\lambda_k,\lambda_k).$ At the primes dividing $(q)$, the first congruences give
\[
(\alpha_k,\beta_k)\equiv(\alpha_0,\beta_0)
\pmod{\mathfrak p^{v_{\mathfrak p}(q)}},
\]
while at the remaining primes $\mathfrak p\in S$ the second congruences
give
\[
(\alpha_k,\beta_k)\equiv
(\alpha_{\mathfrak p},
\beta_{\mathfrak p})
\pmod{\mathfrak p}.
\]
Thus, $\alpha_k,\beta_k$ are integral at every prime in $S$, and they are
integral away from $S$ since $\lambda_k,\mu_k\in\Z[i][S^{-1}]$. Hence, $(\alpha_k,\beta_k)\in\Z[i]^2$ and 
\[
(\alpha_k,\beta_k)\equiv(\alpha_0,\beta_0)\pmod q.
\]
Moreover, hypothesis~(i) and the choice of the residue classes above imply 
\[
v_{\mathfrak p}(\ell_j(\alpha_k,\beta_k))=0
\qquad \text{for every }\mathfrak p\in S
\]
for every $j \in \{1, \dots, n\}$. 

Finally, since $c_j \in \Z[i][S^{-1}]^\times$, the elements
\[
\ell_j(\alpha_k,\beta_k)
=c_j(\lambda_k-e_j\mu_k), \qquad j \in \{1, \dots, n\}
\] 
are pairwise nonassociate primes in $\Z[i][S^{-1}]$. Since they have valuation zero at every prime in $S$, each
$(\ell_j(\alpha_k,\beta_k))$ is a prime ideal of $\Z[i]$. Thus, the elements $\ell_1(\alpha_k,\beta_k), \dots, \ell_n(\alpha_k, \beta_k)$ are pairwise nonassociate Gaussian primes.

The open disks of radius $1/2$ centered at the positive integers are pairwise disjoint, so the ratios $\lambda_k/\mu_k$ are distinct.
Therefore the corresponding pairs $(\alpha_k,\beta_k)$ are distinct, proving the result.
\end{proof}

\begin{rem}\label{rem-kai-admissibility}
In the applications below, each $\ell_j$ is primitive, so its reduction
modulo every prime ideal is a nonzero linear form. This allows us to check condition~(ii)
by a simple counting argument over the residue field. If $\operatorname{Nm}(\mathfrak p)>n$, then the $n$ lines $\ell_j=0$
cannot cover $(\Z[i]/\mathfrak p)^2$. We therefore choose $q$ so that
$\mathfrak p\mid(q)$ for every prime ideal $\mathfrak p$ with
$\operatorname{Nm}(\mathfrak p)\leq n$. Then condition~(ii) is automatic.
\end{rem}

\subsection{A criterion for genuine definition}

For $j=1728$, a simple valuation criterion rules out descent to
$\Q$.

\begin{lem}\label{lem-nonbasechange}
Let $b\in\Q(i)^\times$. Suppose there is a split Gaussian prime
$\pi$, with associated prime ideal $\mathfrak p=(\pi)$ and its complex conjugate $\overline{\mathfrak p}=(\overline{\pi})$, such that 
\[
v_{\mathfrak p}(b)-v_{\overline{\mathfrak p}}(b)
\not\equiv 0 \pmod 4.
\]
Then $E_b$ is genuinely defined over $\Q(i)$.
\end{lem}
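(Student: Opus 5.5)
We argue by contradiction. Suppose $E_b$ is obtained by base change from $\Q$, so that $E_b\cong E'\otimes_{\Q}\Q(i)$ for some $E'/\Q$. Since $j(E')=j(E_b)=1728$, the curve $E'$ has a model $y^2=x^3+ax$ with $a\in\Q^\times$. Over $\Q(i)$ we then have $E_b\cong E_a$, and by the isomorphism criterion recalled in the introduction,
\[
b = a\,c^4 \qquad\text{for some}\quad c\in\Q(i)^\times .
\]
So the plan is to show that no $b$ of this form satisfies the hypothesis on $\pi$.

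The key steps are valuation computations at $\mathfrak p$ and $\overline{\mathfrak p}$.

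First, $a\in\Q^\times$. Since $\pi$ is split, $\mathfrak p\cap\Z=(p)$ for the rational prime $p=\pi\overline\pi$, and $p$ is unramified in $\Q(i)$. Hence
\[
v_{\mathfrak p}(a)=v_p(a)=v_{\overline{\mathfrak p}}(a).
\]
To see this, write $a=p^k a'$ with $a'$ prime to $p$, and note that $p=\pi\overline\pi$ with $v_{\mathfrak p}(\overline\pi)=0$.

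Second, since valuations are additive,
\[
v_{\mathfrak p}(c^4)-v_{\overline{\mathfrak p}}(c^4)=4\bigl(v_{\mathfrak p}(c)-v_{\overline{\mathfrak p}}(c)\bigr)\equiv 0\pmod 4 .
\]

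Combining the two steps gives $v_{\mathfrak p}(b)-v_{\overline{\mathfrak p}}(b)\equiv0\pmod4$, which contradicts the hypothesis.

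The only point that needs care is the first step: justifying that a base-changed curve with $j=1728$ has a twist coefficient of the form $a c^4$ with $a$ rational. Every $E'/\Q$ with $j=1728$ is isomorphic over $\Q$ to some $y^2=x^3+ax$ with $a\in\Q^\times$; the isomorphism classes are exactly the classes of $a$ modulo $(\Q^\times)^4$. Base change preserves this model, and the $\Q(i)$-isomorphism criterion for $E_b$ and $E_{b'}$ then gives $b/a\in(\Q(i)^\times)^4$. The valuation step also needs $\mathfrak p\ne\overline{\mathfrak p}$, so that $v_{\mathfrak p}$ and $v_{\overline{\mathfrak p}}$ are distinct valuations; this is exactly what the splitness hypothesis provides.
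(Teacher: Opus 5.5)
Your proof is correct and is essentially the paper's argument: base change forces $b=a u^4$ with $a\in\Q^\times$ and $u\in\Q(i)^\times$, rationality of $a$ gives $v_{\mathfrak p}(a)=v_{\overline{\mathfrak p}}(a)$, and so $v_{\mathfrak p}(b)-v_{\overline{\mathfrak p}}(b)=4\bigl(v_{\mathfrak p}(u)-v_{\overline{\mathfrak p}}(u)\bigr)$. You also justify that a $j=1728$ curve over $\Q$ has a model $y^2=x^3+ax$, which the paper leaves implicit; splitness is not actually needed for the valuation step, since if $\mathfrak p=\overline{\mathfrak p}$ the hypothesis could never hold.
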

\begin{proof}
If $E_b$ were obtained by base change from $\Q$, then we would have $b = a u^4$ for some $a\in\Q^\times$ and $u\in\Q(i)^\times$. Since $a$ is
rational, $v_{\mathfrak p}(a)=v_{\overline{\mathfrak p}}(a),$ and therefore
\[
v_{\mathfrak p}(b)-v_{\overline{\mathfrak p}}(b)
=4\bigl(v_{\mathfrak p}(u)-v_{\overline{\mathfrak p}}(u)\bigr),
\]
contrary to the hypothesis.
\end{proof}
\section{Rank 4}\label{sec-rank4}

We begin with the one-parameter family. Set
\bal
\lambda_1(T)&=-T,
&\lambda_2(T)&=-i,\\
\lambda_3(T)&=128T-i,
&\lambda_4(T)&=(64-48i)T-i,
\nal
and
\[
b_4(T)=i\lambda_1(T)\lambda_2(T)\lambda_3(T)\lambda_4(T).
\]
The identities
\bal
-4i\lambda_1\lambda_2-\frac14\lambda_3\lambda_4
&=\left(\frac{(32+96i)T+1}{2}\right)^2,\\
-4i\lambda_1\lambda_3-\frac14\lambda_2\lambda_4
&=\left(\frac{(32+32i)T+1}{2}\right)^2
\nal
and Lemma~\ref{lem-factorization-point} give two points on
$E_{b_4(T)}$ over $\Q(i)(T)$, namely
\bal
P_1(T)&=\bigl(4T,\;2T((32+96i)T+1)\bigr),\\
P_2(T)&=\bigl(4T(128iT+1),\;2T(128iT+1)((32+32i)T+1)\bigr).
\nal

Homogenize the four factors by setting
\[
\ell_j(X,Y)=Y\lambda_j(X/Y) \qquad \text{for }j \in \{1,2,3,4\},
\]
and set
\[
B_4(X,Y)=i\ell_1\ell_2\ell_3\ell_4=Y^4b_4(X/Y).
\]
Thus, $E_{B_4(X,Y)}$ and $E_{b_4(X/Y)}$ are related by \eqref{eq-projective-isomorphism}. Homogenizing the two square-split identities gives corresponding points $\widetilde P_1,\widetilde P_2$ on
$E_{B_4(X,Y)}$. Since
\[
XY=-i\ell_1\ell_2,
\qquad
X(128iX+Y)=-i\ell_1\ell_3,
\]
their Kummer classes are
\[
\delta_{B_4}(\widetilde P_1)=[i\ell_1\ell_2],
\qquad
\delta_{B_4}(\widetilde P_2)=[i\ell_1\ell_3].
\]
Under \eqref{eq-projective-isomorphism}, these points specialize to
$P_1(t)$ and $P_2(t)$ when $t=\alpha/\beta$.

The following elementary lemma largely determines the rank-$4$ descent
from these two Kummer classes.

\begin{lem}\label{lem-rank4-rigid}
Let $M\in\operatorname{Mat}_{4\times4}(\F_2)$ be symmetric with
$M\one=\one$. Suppose
\[
M(1,1,0,0)^T=M(1,0,1,0)^T=\one.
\]
Then $\operatorname{rank}M=2$ and
\[
\#\{(s,x)\in\F_2\times\F_2^4:Mx=s\one\}=8.
\]
\end{lem}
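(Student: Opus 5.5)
We will use linearity to read off the image of $M$ directly from the hypotheses, then use symmetry to pin down the kernel, and finally count solutions by splitting on $s$. Write $u=(1,1,0,0)^T$ and $w=(1,0,1,0)^T$.

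\textbf{Step 1: the image.} By hypothesis $Mu=Mw=M\one=\one$. By linearity,
\[
M(u+w)=M(0,1,1,0)^T=0
\qquad\text{and}\qquad
M(\one+u)=M(0,0,1,1)^T=0 .
\]
The vectors $(0,1,1,0)^T$ and $(0,0,1,1)^T$ are linearly independent, so $\dim\ker M\geq 2$ and $\operatorname{rank}M\leq 2$. Also $\one\in\operatorname{im}M$, so $\operatorname{rank}M\geq 1$.

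\textbf{Step 2: the rank is exactly $2$.} Suppose instead $\operatorname{rank}M=1$. Then $\operatorname{im}M=\langle\one\rangle$, and since $M$ is symmetric,
\[
\operatorname{row}(M)=\operatorname{im}(M)=\langle\one\rangle .
\]
So every row of $M$ is $0$ or $\one$. Symmetry then forces $M=0$ or $M=J$, the all-ones matrix. We have $M\neq0$ because $M\one=\one$. But $J\one=4\cdot\one=0$ over $\F_2$, which also contradicts $M\one=\one$. Hence $\operatorname{rank}M=2$ and $\ker M=\langle(0,1,1,0)^T,(0,0,1,1)^T\rangle$.

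\textbf{Step 3: counting.} We count pairs $(s,x)$ with $Mx=s\one$ according to $s$.
\begin{enumerate}[label=(\roman*)]
\item For $s=0$, the solutions are exactly $x\in\ker M$, giving $4$ solutions.
\item For $s=1$, the equation $Mx=\one$ has the particular solution $x=\one$, so the solution set is the coset $\one+\ker M$, giving another $4$ solutions.
\end{enumerate}
The total is $8$.

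The only step that needs care is ruling out rank $1$. The key is that symmetry turns $\operatorname{im}M=\langle\one\rangle$ into the very restrictive condition that every row is $0$ or $\one$, which leaves only $M=0$ or $M=J$. The remaining steps are linear bookkeeping.
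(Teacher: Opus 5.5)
Your proof is correct. Its skeleton matches the paper's proof, and it differs only in how symmetry is used to exclude rank $1$.

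\textbf{Where the two proofs agree.} The paper writes the columns $c_1,\dots,c_4$ of $M$ and derives $c_2=c_3=c_4$. This is exactly your observation that $M(0,1,1,0)^T=M(0,0,1,1)^T=0$, so your Step 1 is the paper's first step in kernel language. The final count is also the same coset argument. You take the particular solution $x=\one$ directly from $M\one=\one$, while the paper notes that $\one=c_1+c_2\in\operatorname{im}M$.

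\textbf{Where they differ.} The paper applies symmetry entry by entry to the column description $c_1=\one+c$, $c_2=c_3=c_4=c$. This gives $c=(1+a,a,a,a)$ for some $a\in\F_2$, which pins $M$ down to two explicit matrices. That classification is what underlies the remark after the lemma, that the quadratic-residue graph is $K_{1,3}$ or its complement. You instead use symmetry globally, through $\operatorname{row}(M)=\operatorname{im}(M)$. This shows that a rank-one symmetric matrix with image $\langle\one\rangle$ must be $0$ or the all-ones matrix. Both are incompatible with $M\one=\one$, the latter because $4\one=0$ over $\F_2$.

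\textbf{What each route buys.} Your route is slightly more conceptual and avoids computing $M$. It also makes visible exactly where symmetry is needed: without it, the rank-one matrix $\one\,(1,0,0,0)$ satisfies all the other hypotheses. The paper's route yields the explicit description of $M$, and hence the graph-theoretic interpretation, at no extra cost.
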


\begin{proof}
Write the columns of $M$ as $c_1,\dots,c_4$. The hypotheses give
\[
c_1+c_2=c_1+c_3=\one,
\]
so $c_2=c_3$. Since $M\one=\one$, we also have
\[
c_1+c_2+c_3+c_4=\one,
\]
and hence $c_4=c_2$. Thus
\[
c_2=c_3=c_4=:c,
\qquad
c_1=\one+c.
\]
By symmetry, the last three coordinates of $c$ are equal, while its
first coordinate is their complement. Hence $c\notin\{0,\one\}$, so
$c$ and $\one+c$ are linearly independent. Therefore
$\operatorname{rank}M=2$. Moreover, $\one=c_1+c_2$ lies in the image
of $M$. Thus, for each $s\in\F_2$, the equation $Mx=s\one$ has
$2^{4-2}=4$ solutions, giving eight pairs $(s,x)$ in total.
\end{proof}

\begin{rem}
When $M=L+I_4$, as in the rank-$4$ descent below, the proof of
Lemma~\ref{lem-rank4-rigid} has a graph-theoretic interpretation. The
two possibilities for $M$ correspond exactly to the quadratic-residue
graph being the star $K_{1,3}$, with the $\pi_1$ vertex at its center, or its
complement. 
\end{rem}

For the local conditions below, fix
\[
\alpha_0=1+2i,
\qquad
\beta_0=42-13i.
\]
At this specialization, the four values
$\ell_1(\alpha_0,\beta_0),\dots,\ell_4(\alpha_0,\beta_0)$ are
\bal
-1-2i,
\quad -13-42i,
\quad 115+214i,
\quad 147+38i,
\nal
with respective norms
\[
5,
\quad 1933,
\quad 59021,
\quad 23053.
\]
These norms are rational primes, so the four values are Gaussian primes.
Moreover, each is congruent to $3+2i$ modulo $4$, and therefore primary. In the notation of Subsection~\ref{subsection-odd-local-descent}, this also shows that $\nu=\one$.

\begin{prop}\label{prop-rank4-family}
Suppose $(\alpha,\beta)\in\Z[i]^2$ satisfies
\[
(\alpha,\beta)\equiv(\alpha_0,\beta_0)\pmod{28},
\]
and set
\[
\pi_j=\ell_j(\alpha,\beta),
\qquad
\mathfrak p_j=(\pi_j)
\qquad
\text{for }j \in \{1,2,3,4\}.
\]
Further suppose that $\pi_1, \pi_2, \pi_3, \pi_4$ are Gaussian primes, and let
\[
b=B_4(\alpha,\beta).
\]
Then $E_b$ is genuinely defined over $\Q(i)$, has Mordell--Weil group
\[
E_b(\Q(i))\cong\Z^4\oplus\Z/2\Z,
\]
and satisfies
\[
\Sh(E_b/\Q(i))[2^\infty]=0.
\]
\end{prop}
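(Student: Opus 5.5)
The plan is to feed the two constructed points into Lemma~\ref{lem-sharp-descent} with $m=2$, getting the Selmer upper bound from Proposition~\ref{prop-selmer-linear-system} together with Lemma~\ref{lem-rank4-rigid}. Torsion then comes from Proposition~\ref{prop-torsion}, and genuineness from Lemma~\ref{lem-nonbasechange}. All the local information we need is read off from the congruence $(\alpha,\beta)\equiv(\alpha_0,\beta_0)\pmod{28}$, which we use in two ways: modulo $4$ and modulo $7$.

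\textbf{Modulo $4$.} Since the $\ell_j$ have coefficients in $\Z[i]$, we get $\pi_j\equiv\ell_j(\alpha_0,\beta_0)\equiv 3+2i\pmod 4$. Hence each $\pi_j$ is a primary odd Gaussian prime and $\nu=\one$. This also shows that no $\pi_j$ is an associate of a rational inert prime: the primary associate $-p$ of an inert prime $p$ is $\equiv 1\pmod 4$. So each $\pi_j$ is split, with rational prime norm.

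\textbf{Modulo $7$.} The norms are determined modulo $7$ by the congruence, and from the listed base values they are $\equiv 5,1,4,2\pmod 7$ respectively. These are pairwise distinct, so no $\pi_k$ is associate to $\pi_j$ or to $\overline{\pi_j}$ for $k\neq j$.

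Thus $b=i\,\pi_1\pi_2\pi_3\pi_4$ with distinct primary primes, and $s_b=1$. For genuineness, take $\mathfrak p=\mathfrak p_1$ in Lemma~\ref{lem-nonbasechange}. Then $v_{\mathfrak p}(b)=1$ and $v_{\overline{\mathfrak p}}(b)=0$, and $1\not\equiv 0\pmod 4$, so $E_b$ is genuinely defined over $\Q(i)$. For torsion, every $v_{\mathfrak p_j}(b)$ equals $1$. This odd valuation rules out $b\in-(\Q(i)^\times)^2$. It also rules out $b\in(-1\pm2i)(\Q(i)^\times)^4$, since at most one prime ideal can carry valuation $\not\equiv 0\pmod 4$ in such $b$, while here four do. Hence Proposition~\ref{prop-torsion} gives torsion $\Z/2\Z$.

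\textbf{Lower bound.} Specializing $\widetilde P_1,\widetilde P_2$ gives points $Q_1,Q_2\in E_b(\Q(i))$. Their $x$-coordinates are nonzero, since they are nonzero multiples of $\pi_1\pi_2$ and $\pi_1\pi_3$. Their Kummer classes are $\delta_b(Q_1)=[i\pi_1\pi_2]$ and $\delta_b(Q_2)=[i\pi_1\pi_3]$. In the coordinates $(s,x)$ of Proposition~\ref{prop-selmer-linear-system}, the three classes $[b]$, $\delta_b(Q_1)$, $\delta_b(Q_2)$ are
\[
(1,\one),\qquad (1,(1,1,0,0)),\qquad (1,(1,0,1,0)).
\]
By unique factorization these are linearly independent in $\Q(i)^\times/(\Q(i)^\times)^2$.

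\textbf{Upper bound.} With $s_b=1$ and $\nu=\one$, the system \eqref{eq-selmer-squarefree-system} becomes $Mx=s\one$ with $M=L+I_4$. The matrix $M$ is symmetric, and $M\one=L\one+\one=\one$. The three vectors above come from genuine points, so they lie in the Selmer group and must satisfy the system. Taking $s=1$ gives $M(1,1,0,0)^T=M(1,0,1,0)^T=\one$. Lemma~\ref{lem-rank4-rigid} then says the solution set has $8$ elements. Hence $\dim_{\F_2}\Sel_{1+i}(E_b/\Q(i))\le 3=m+1$.

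\textbf{Conclusion.} Lemma~\ref{lem-sharp-descent} now gives rank exactly $4$ and $\Sh(E_b/\Q(i))[2^\infty]=0$. Combined with the torsion computation, $E_b(\Q(i))\cong\Z^4\oplus\Z/2\Z$.

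The main point needing care is the congruence bookkeeping. We must check that reduction modulo $4$ and modulo $7$ really pins down primarity, the value $s_b=1$, and distinctness of the primes up to association and conjugation. The Selmer bound itself is immediate from Lemma~\ref{lem-rank4-rigid}, once we notice that the known points supply exactly its hypotheses.
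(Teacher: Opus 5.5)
Your proposal is correct and follows the paper's proof essentially step for step. It uses the mod-$4$ congruence for primarity, $\nu=\one$ and $s_b=1$; the Selmer bound from Proposition~\ref{prop-selmer-linear-system} and Lemma~\ref{lem-rank4-rigid}; independence of $[b],[i\pi_1\pi_2],[i\pi_1\pi_3]$ with Lemma~\ref{lem-sharp-descent}; and then Proposition~\ref{prop-torsion} and Lemma~\ref{lem-nonbasechange}. The only difference is cosmetic: you separate the primes, and $\overline{\pi_1}$ from them, using the norms $5,1,4,2 \pmod 7$ plus a mod-$4$ splitness argument, whereas the paper compares the residues of the $\pi_j$ modulo $7$ up to units directly.
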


\begin{proof}
Modulo $7$, the Gaussian primes $\pi_1, \dots, \pi_4$ are congruent to
\[
6+5i,
\quad
1,
\quad
3+4i,
\quad
3i,
\]
respectively. These residues lie in distinct orbits under multiplication
by $\Z[i]^\times=\{\pm1,\pm i\}$, so the $\pi_j$ are pairwise
nonassociate. The congruence modulo $4$ implies that they are primary
and that $\nu=\one$. Since
\[
b=i\pi_1\pi_2\pi_3\pi_4,
\]
we have $s_b=1$ in the notation of Proposition~\ref{prop-selmer-linear-system}. The proposition therefore
shows that every Selmer class is uniquely represented by a pair
$(s,x)\in\F_2\times\F_2^4$ satisfying
\begin{align}\label{eq-Laplacian-in-proof}
(L+I_4)x=s\one.
\end{align}
Since $L$ is symmetric and $L\one=0$, the matrix $M:=L+I_4$ is symmetric and satisfies $M\one=\one$. The specializations of $\widetilde P_1$ and $\widetilde P_2$ correspond to
\[
(s,x)=\bigl(1,(1,1,0,0)\bigr),
\; 
\bigl(1,(1,0,1,0)\bigr).
\]
Lemma~\ref{lem-rank4-rigid} shows that the equation \eqref{eq-Laplacian-in-proof} 
has exactly eight solutions $(s,x)\in\F_2\times\F_2^4$. Hence, 
\[
\dim_{\F_2}\Sel_{1+i}(E_b/\Q(i))\leq3.
\]

On the other hand, the three squareclasses $[b], [i \pi_1 \pi_2], [i \pi_1 \pi_3]$ are linearly independent. Indeed, reducing the
valuations at $\mathfrak p_4,\mathfrak p_2,\mathfrak p_3$ modulo $2$ successively forces the coefficients in any $\F_2$-linear relation among them to vanish. They are the Kummer classes of
$(0,0)$ and the specializations of $\widetilde P_1,\widetilde P_2$, respectively. Therefore, 
Lemma~\ref{lem-sharp-descent} with $m=2$ yields
\[
\operatorname{rk}_{\Z}E_b(\Q(i))=4
\qquad \text{and}\qquad 
\Sh(E_b/\Q(i))[2^\infty]=0.
\]

Since $b=i\pi_1\pi_2\pi_3\pi_4$ has odd valuation at four distinct odd
primes, $b \notin -(\Q(i)^\times)^2$ and $b \notin  (-1 \pm 2i) (\Q(i)^\times)^4$. Hence, Proposition \ref{prop-torsion} gives us
\[
E_b(\Q(i))_{\mathrm{tors}}\cong\Z/2\Z.
\]

Finally, the four unit multiples of $6+2i$ modulo $7$ are
\[
\{6+2i,\;5+6i,\;1+5i,\;2+i\},
\]
none of which is congruent to one of the four prime values above. Thus,
$\overline{\pi_1}$ is nonassociate to every $\pi_j$, and so 
\[
v_{\mathfrak p_1}(b)=1,
\qquad
v_{\overline{\mathfrak p}_1}(b)=0.
\]
Therefore, $E_b$ is genuinely defined over $\Q(i)$ by Lemma~\ref{lem-nonbasechange}. 
\end{proof}

\begin{thm}\label{thm-rank4-infinite}
There are infinitely many pairwise nonisomorphic elliptic curves
$E/\Q(i)$ genuinely defined over $\Q(i)$ with $j(E)=1728$ such that
\[
E(\Q(i))\cong\Z^4\oplus\Z/2\Z
\]
and
\[
\Sh(E/\Q(i))[2^\infty]=0.
\]
\end{thm}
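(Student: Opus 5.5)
The plan is to combine Proposition~\ref{prop-linear-primes} with Proposition~\ref{prop-rank4-family}. Take the four homogeneous forms $\ell_1=-X$, $\ell_2=-iY$, $\ell_3=128X-iY$ and $\ell_4=(64-48i)X-iY$, with $q=28$ and $(\alpha_0,\beta_0)=(1+2i,42-13i)$. Proposition~\ref{prop-linear-primes} should then supply infinitely many pairs $(\alpha,\beta)\equiv(\alpha_0,\beta_0)\pmod{28}$ at which all four values $\ell_j(\alpha,\beta)$ are Gaussian primes. For each such pair, Proposition~\ref{prop-rank4-family} gives that $E_b$ with $b=B_4(\alpha,\beta)$ is genuinely defined over $\Q(i)$, that $E_b(\Q(i))\cong\Z^4\oplus\Z/2\Z$, and that $\Sh(E_b/\Q(i))[2^\infty]=0$. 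What remains is to check the hypotheses of Proposition~\ref{prop-linear-primes} and to show that infinitely many of the resulting curves are pairwise nonisomorphic.

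\textbf{Hypotheses.} The four forms are pairwise nonproportional and each is primitive. By Remark~\ref{rem-kai-admissibility}, condition~(ii) holds at every prime ideal of norm greater than $4$. The prime ideals of norm at most $4$ are $(1+i)$ (norm $2$) and $(3)$ (norm $9$ is too large), so only $(1+i)$ has to be covered, and it divides $28$. Condition~(i) asks that the values at $(\alpha_0,\beta_0)$ be units at $(1+i)$ and at $(7)$. This holds because the listed values have odd prime norms $5,1933,59021,23053$, none equal to $2$ or $49$. Equivalently, $7$ is inert and none of these values is divisible by $7$; their residues mod $7$ were displayed as nonzero. Hence Proposition~\ref{prop-linear-primes} applies.

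\textbf{Infinitely many isomorphism classes.} This is the step I expect to need the most care. For each pair, $b=i\pi_1\pi_2\pi_3\pi_4$ with the $\pi_j$ pairwise nonassociate primes, so $b$ is squarefree. Two such $b,b'$ give isomorphic curves only if $b/b'$ is a fourth power. Since all valuations of $b$ and $b'$ lie in $\{0,1\}$, this forces $b$ and $b'$ to have the same prime support, and then they differ only by a unit. Each isomorphism class therefore determines its set of prime divisors, in particular the ideal $(\pi_1)=(\alpha)$. It then suffices to see that infinitely many distinct ideals $(\alpha)$ occur. If only finitely many did, $\alpha$ would range over finitely many values up to units. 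The ideal $(\beta)=(\pi_2)$ is likewise determined by the support, since $\pi_2=-i\beta$. So $(\alpha,\beta)$ would lie in a finite set up to units, hence in a finite set, because there are only finitely many unit choices. This contradicts Proposition~\ref{prop-linear-primes} producing infinitely many distinct pairs.

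To be safe, I would instead argue through the distinct ratios $t=\alpha/\beta$ in the proof of Proposition~\ref{prop-linear-primes}. By the above, a single isomorphism class fixes the four prime ideals, hence $(\alpha)$ and $(\beta)$, and so fixes $t$ up to the finitely many unit ratios. Infinitely many distinct $t$ therefore give infinitely many pairwise nonisomorphic curves.
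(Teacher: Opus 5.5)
Your proposal is correct and follows the paper's proof. You apply Proposition~\ref{prop-linear-primes} with $q=28$, where only $(1+i)$ has norm at most $4$ and the base values are units at $(1+i)$ and $(7)$. You then invoke Proposition~\ref{prop-rank4-family}, and finally show that each quartic-twist class arises from only finitely many pairs, since the squarefree $b$ determines its prime support and $\pi_1=-\alpha$, $\pi_2=-i\beta$; this is the paper's finite-to-one argument via the nonproportional forms $\ell_1,\ell_2$.

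One wording repair is needed. An isomorphism class fixes only the unordered set of four prime ideals, so $(\alpha)$ and $(\beta)$ are determined up to a choice among four, not uniquely. Also, the contradiction should assume finitely many isomorphism classes, which gives finitely many supports and hence finitely many pairs. Assuming only finitely many ideals $(\alpha)$, as you wrote it, does not by itself bound $(\beta)$.
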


\begin{proof}
Apply Proposition~\ref{prop-linear-primes} to the four forms
$\ell_1,\dots,\ell_4$ with
\[
q=28,
\qquad
(\alpha,\beta)\equiv(\alpha_0,\beta_0)\pmod{28}.
\]
The four forms are pairwise nonproportional and primitive, and the
four values $\ell_j(\alpha_0,\beta_0)$ are units at every prime ideal
dividing $(28)$. Since every prime ideal not dividing $(28)$ has norm
at least $5>4$, condition~(ii) holds by Remark~\ref{rem-kai-admissibility}. Proposition~\ref{prop-linear-primes} therefore gives infinitely many pairs $(\alpha,\beta)$ in this congruence class such that the four
values $\ell_j(\alpha,\beta)$ are pairwise nonassociate Gaussian primes. Hence, 
each of these pairs satisfies the hypotheses of
Proposition~\ref{prop-rank4-family}.

Since $\ell_2(\alpha,\beta)=-i\beta$ is prime, we have $\beta\neq 0$, so
$t=\alpha/\beta$ is defined; moreover,
$B_4(\alpha,\beta)=\beta^4b_4(t)$, so 
$E_{B_4(\alpha,\beta)}\cong E_{b_4(t)}$ over $\Q(i)$. Suppose two such pairs $(\alpha,\beta)$ and $(\alpha',\beta')$ give the
same value of $t$. Then
\[
(\alpha',\beta')=\lambda(\alpha,\beta)
\]
for some $\lambda\in\Q(i)^\times$, and so
$\ell_j(\alpha',\beta')=\lambda\ell_j(\alpha,\beta)$ for every $j$.
Since both sets of values consist of pairwise nonassociate Gaussian
primes, $\lambda$ can have no nonzero prime valuation and must therefore
be a unit. Thus, each value of $t$ arises from at most four such pairs,
so Proposition~\ref{prop-linear-primes} yields infinitely many values of
$t$.

If two such specializations determine the same quartic-twist class, then
their twist coefficients have the same valuations modulo $4$ at every
prime ideal. Since each coefficient is the product of four pairwise
nonassociate odd Gaussian primes, the two specializations therefore
involve the same four odd prime ideals. After permuting these ideals,
each value $\ell_j(\alpha,\beta)$ is determined up to multiplication by
a unit, so there are only finitely many possible ordered quadruples of
prime values. Since $\ell_1$ and $\ell_2$ are nonproportional, the values
of these two forms determine $(\alpha,\beta)$. Hence, each quartic-twist
class arises from only finitely many such pairs, and the infinitely many
specializations above yield infinitely many pairwise nonisomorphic
curves over $\Q(i)$.
\end{proof}

\section{Rank 6}\label{sec-rank6}

We now realize the eight-factor support pattern of
Lemma~\ref{lem-square-split-supports}(ii) by an explicit one-parameter
family. Set
\bal
\lambda_0(T)&=T+1,
&\lambda_1(T)&=3T+1,\\
\lambda_2(T)&=-6iT-2-5i,
&\lambda_3(T)&=(8+10i)T+2+3i,\\
\lambda_4(T)&=(-2+2i)T+i,
&\lambda_5(T)&=(-6-6i)T+i,\\
\lambda_6(T)&=2iT+i,
&\lambda_7(T)&=(24-30i)T+16-11i,
\nal
and
\[
b_6(T)=\prod_{j=0}^7\lambda_j(T).
\]
Also define
\bal
R_1(T)&=42T^2+(42-4i)T+\frac{19}{2},\\
R_2(T)&=\frac{264-108i}{5}T^2+\frac{138-56i}{5}T+\frac{9-3i}{5},\\
R_3(T)&=(60-24i)T^2+(40-18i)T+7-5i.
\nal
Set $\eta=(2+ i)/5$. Then the following square-split identities hold:
\bal
4\lambda_0\lambda_1\lambda_4\lambda_5
+4^{-1}\lambda_2\lambda_3\lambda_6\lambda_7
&=R_1^2,\\
\eta^2\lambda_0\lambda_2\lambda_4\lambda_7
+\eta^{-2}\lambda_1\lambda_3\lambda_5\lambda_6
&=R_2^2,\\
-\lambda_0\lambda_2\lambda_5\lambda_6
-\lambda_1\lambda_3\lambda_4\lambda_7
&=R_3^2.
\nal
Thus, with
\bal
e_1(T)&=4\lambda_0\lambda_1\lambda_4\lambda_5,\\
e_2(T)&=\eta^2\lambda_0\lambda_2\lambda_4\lambda_7,\\
e_3(T)&=-\lambda_0\lambda_2\lambda_5\lambda_6,
\nal
the curve $E_{b_6(T)}$ over $\Q(i)(T)$ has the three points
\[
P_j(T)=\bigl(e_j(T),e_j(T)R_j(T)\bigr),
\qquad j=1,2,3.
\]

Homogenize the eight factors by setting
\[
\ell_j(X,Y)=Y\lambda_j(X/Y),
\qquad 0\leq j\leq7,
\]
and set
\[
B_6(X,Y)=\prod_{j=0}^7\ell_j(X,Y)
= (Y^2)^4 b_6(X/Y).
\]
Thus $E_{B_6(X,Y)}$ and $E_{b_6(X/Y)}$ are related by
\eqref{eq-projective-isomorphism}. Homogenizing the three square-split
identities gives corresponding points
$\widetilde P_1,\widetilde P_2,\widetilde P_3$ on $E_{B_6(X,Y)}$.
Since $4$, $\eta^2$, and $-1$ are squares in $\Q(i)$, their Kummer
classes are
\bal
\delta_{B_6}(\widetilde P_1)
&=[\ell_0\ell_1\ell_4\ell_5],\\
\delta_{B_6}(\widetilde P_2)
&=[\ell_0\ell_2\ell_4\ell_7],\\
\delta_{B_6}(\widetilde P_3)
&=[\ell_0\ell_2\ell_5\ell_6].
\nal
Under \eqref{eq-projective-isomorphism}, these points specialize to
$P_j(t)$ when $t=\alpha/\beta$.

We identify $0,1,\ldots,7$ with
\begin{align}\label{eq-F2^3-labels}
(0,0,0),(0,1,1),(1,0,0),(1,1,1),(0,0,1),(0,1,0),(1,1,0),(1,0,1)
\in\F_2^3,
\end{align}
respectively. Then the supports of the three Kummer classes above are precisely
\[
H_j=\{v\in\F_2^3:v_j=0\},
\qquad j=1,2,3.
\]
Together with the all-ones support of the Kummer class of $(0,0)$,
they span
\begin{equation}\label{eq-rm-code}
C:=\langle\one_{H_1},\one_{H_2},\one_{H_3},\one\rangle
=\mathrm{RM}(1,3)\subset\F_2^8.
\end{equation}
Equivalently, $C$ is the space of evaluation vectors of affine-linear
functions on $\F_2^3$.
 
Set
\[
q_6=(1+i)^7\cdot3\cdot5\cdot13\cdot17\cdot41,
\]
and fix
\begin{equation}\label{eq-rank6-base-specialization}
\alpha_0=-5-5i,
\qquad
\beta_0=4+11i.
\end{equation}
At this specialization, the eight Gaussian integers
$\ell_0(\alpha_0,\beta_0), \dots, \ell_7(\alpha_0, \beta_0)$ are
\begin{equation}\label{eq-rank6-base-primes}
\begin{gathered}
-1+6i,
\quad -11-4i,
\quad 17-12i,
\quad -15-56i,\\
9+4i,
\quad -11+64i,
\quad -1-6i,
\quad -85+162i.
\end{gathered}
\end{equation}
Their norms are the rational primes
\[
37,
\quad137,
\quad433,
\quad3361,
\quad97,
\quad4217,
\quad37,
\quad33469,
\]
so they are each a Gaussian prime. Moreover, they are pairwise nonassociate and primary, and in the notation of
Subsection~\ref{subsection-odd-local-descent},
\begin{equation}\label{eq-rank6-nu}
\nu=(1,0,0,0,0,0,1,1)^T.
\end{equation}
Let $G_0$ denote their quadratic-residue graph. We now show that the
congruence condition modulo $q_6$ determines this graph up to complementation.

\begin{lem}\label{lem-rank6-residue-graphs}
Suppose $(\alpha,\beta)\in\Z[i]^2$ satisfies
\[
(\alpha,\beta)\equiv(\alpha_0,\beta_0)\pmod{q_6},
\]
and that the eight Gaussian integers
\[
\pi_j=\ell_j(\alpha,\beta),
\qquad j=0,\ldots,7,
\]
are pairwise nonassociate Gaussian primes. Then the $\pi_j$ are primary,
and their quadratic-residue graph is either $G_0$ or its complement.
\end{lem}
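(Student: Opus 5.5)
The plan is to show that every off-diagonal symbol $\left(\frac{\pi_k}{\pi_j}\right)$ splits into a factor fixed by the congruence class modulo $q_6$ and a factor depending only on $j$. Quadratic reciprocity will then force the $j$-dependent factor to be the same for every $j$.

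\emph{Primary.} First, $(1+i)^7\mid q_6$. Hence $\pi_j\equiv \ell_j(\alpha_0,\beta_0)\pmod{(1+i)^7}$, and the values in \eqref{eq-rank6-base-primes} are primary, so each $\pi_j$ is primary. The same congruence modulo $(1+i)^7\supseteq(4)$ shows that $\nu$ agrees with \eqref{eq-rank6-nu}.

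\emph{Splitting the symbol.} Write $\ell_j=a_jX+b_jY$ and $D_{jk}=a_jb_k-a_kb_j\neq0$. The identity
\[
a_j\ell_k-a_k\ell_j=D_{jk}Y
\]
gives $a_j\pi_k\equiv D_{jk}\beta\pmod{\pi_j}$. Every prime dividing some $a_j$ or $D_{jk}$ should divide $q_6$; this is where the primes $3,5,13,17,41$ and $1+i$ come from, and it must be checked from the explicit coefficients. Consequently $\pi_j\nmid a_j\beta D_{jk}$: for $\beta$ this follows from $\pi_j\nmid q_6$ and a short check using that the $\pi$'s are nonassociate primes. We obtain
\[
\left(\frac{\pi_k}{\pi_j}\right)=\left(\frac{a_j^{-1}D_{jk}}{\pi_j}\right)\cdot\varepsilon_j,
\qquad \varepsilon_j:=\left(\frac{\beta}{\pi_j}\right).
\]
For the first factor, I will factor $a_j^{-1}D_{jk}$ as a unit times powers of $1+i$ times primary primes dividing $q_6$. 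Then I will evaluate it using the supplementary laws \eqref{eq-quadratic-supplementary} (which depend on $\pi_j$ modulo $(1+i)^7$) and reciprocity $\left(\frac{\rho}{\pi_j}\right)=\left(\frac{\pi_j}{\rho}\right)$ for odd $\rho\mid q_6$ (which depends on $\pi_j\bmod\rho$). All of this data is fixed by $(\alpha,\beta)\bmod q_6$. Therefore the first factor coincides with its value at $(\alpha_0,\beta_0)$, call it $g_{jk}$.

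\emph{Symmetry argument.} Let $G$ be the quadratic-residue graph of the $\pi_j$, written additively in $\F_2$. The formula above gives
\[
G_{jk}=g_{jk}+e_j
\quad\text{and}\quad
(G_0)_{jk}=g_{jk}+e^0_j,
\]
where $e_j=\log_{-1}\varepsilon_j$ and $e^0_j$ is the analogous quantity at the base point. Put $\delta_j=e_j-e_j^0$, so that $G_{jk}=(G_0)_{jk}+\delta_j$. Both $G$ and $G_0$ are symmetric by reciprocity for distinct primary primes. Comparing $G_{jk}$ with $G_{kj}$ therefore gives $\delta_j=\delta_k$ for all $j\neq k$, so $\delta$ is constant. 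If $\delta=0$ then $G=G_0$; if $\delta=1$ then every off-diagonal entry flips, which gives the complement of $G_0$.

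\emph{Main obstacle.} The main difficulty is the bookkeeping in the second step. One must verify that all $a_j$ and all $D_{jk}$ have prime support inside the primes dividing $q_6$ (including $1+i$), and that the $2$-adic precision $(1+i)^7$ suffices for the symbols of $i$ and $1+i$. Here $(1+i)^7$ is more than the $(1+i)^5$ needed for the supplementary laws. I would first tabulate the resultants $D_{jk}$ for the eight explicit forms and read off their factorizations.
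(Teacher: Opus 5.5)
Your proof is correct. Its first two steps match the paper's proof:
\begin{itemize}
\item primariness follows from the congruence modulo $(1+i)^7$;
\item the identity $A_j\pi_k-A_k\pi_j=D_{j,k}\beta$ is used in the same way;
\item all $A_j$ and $D_{j,k}$ are supported on primes dividing $q_6$. The paper verifies this via the norm computation $\operatorname{Nm}(\Delta)=2^{72}3^{20}5^{28}13^6 17^6 41^2$, which is exactly the check you defer.
\end{itemize}

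You genuinely diverge in showing that the $j$-dependent factor contributes only one global sign. The paper passes to the primary associate $\beta'=i\beta$ and checks that $\beta'$ is coprime to every $\pi_j$ and to $\alpha$. It then applies reciprocity with the composite primary modulus $\beta'$, together with $\pi_j\equiv A_j\alpha\pmod{\beta'}$, to write $\left(\frac{\beta'}{\pi_j}\right)=\left(\frac{A_j}{\beta'}\right)\left(\frac{\alpha}{\beta'}\right)$. This isolates the explicit sign $\kappa=\left(\frac{\alpha}{\beta'}\right)$, the same for every $j$.

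You instead compare the two graphs entrywise. You note that $G_{jk}+(G_0)_{jk}=\delta_j$ depends only on the row index, and the symmetry of both $G$ and $G_0$ then forces $\delta_j=\delta_k$ for all $j\neq k$. This is valid and somewhat more elementary. It uses reciprocity only between distinct primary primes, which is already what makes the quadratic-residue graph symmetric. It avoids the multiplicatively extended symbol with composite denominator $\beta'$ and the coprimality of $\alpha$ and $\beta'$. What the paper's route buys is an explicit criterion, $\kappa$ versus its base-point value, for which of $G_0$ and its complement occurs. That criterion is never needed later, since Lemma~\ref{lem-rank6-kernel} handles both cases uniformly.

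One minor slip: the ideal containment is $(1+i)^7\subseteq(4)$, not $\supseteq$. Your intended conclusion, that congruence modulo $(1+i)^7$ implies congruence modulo $4$, is right.
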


\begin{proof}
For every $j\in\{0,\ldots,7\}$, let $A_j,B_j\in\Z[i]$ be such that
\[
\ell_j(X,Y)=A_jX+B_jY.
\]
Also define
\[
D_{j,k}=A_jB_k-A_kB_j
\qquad \text{for all }0\leq j<k\leq 7,
\]
and set 
\[
\Delta = \prod_{j=0}^7A_j \prod_{0\leq j<k\leq7}D_{j,k}. 
\]
A direct norm calculation gives
\bal
\prod_{0\leq j<k\leq7}\operatorname{Nm}(D_{j,k})
&=2^{58}3^{12}5^{28}13^6 17^6,\\
\prod_{j=0}^7\operatorname{Nm}(A_j)
&=2^{14}3^8 41^2,
\nal
and so
\[
\operatorname{Nm}(\Delta)
=2^{72}3^{20}5^{28}13^6 17^6 41^2.
\]
Thus, every prime ideal dividing some $A_j$ or $D_{j,k}$ lies above one of the rational primes
\[
2,3,5,13,17,41,
\]
and therefore divides $(q_6)$. Since $\pi_j=\ell_j(\alpha,\beta)$, we have
$\pi_j\equiv\ell_j(\alpha_0,\beta_0)\pmod{q_6}$. Hence, the $\pi_j$ are
units at every prime ideal dividing $(q_6)$, and the congruence modulo
$(1+i)^7$ implies that they are primary.

In order to apply quadratic reciprocity, set $\beta'=i\beta$. Since
$\beta\equiv\beta_0\pmod{q_6}$ and $i\beta_0=-11+4i$ is primary and a
unit at every prime ideal dividing $(q_6)$, we have
$\beta'\equiv-11+4i\pmod{q_6}$. Hence, $\beta'$ is primary and is a unit
at every prime ideal dividing $(q_6)$.

We claim that $\beta'$ is coprime to every $\pi_j$. Indeed, suppose
that a prime ideal $\mathfrak p\nmid(q_6)$ divides both $\beta'$ and
$\pi_j$. Then $\mathfrak p\mid\beta$, while $A_j$ is a unit at
$\mathfrak p$ since every prime divisor of $A_j$ divides $(q_6)$. From
\[
\pi_j=A_j\alpha+B_j\beta
\]
it follows that $\mathfrak p\mid\alpha$, and hence
$\mathfrak p\mid\pi_k$ for every $k$, contradicting that the $\pi_k$
are pairwise nonassociate. Since $\beta'$ is a unit at every prime
dividing $(q_6)$, this proves the claim. The same argument shows that
$\alpha$ and $\beta'$ are coprime.

For $0\leq j<k\leq7$, we have
\[
A_j\pi_k-A_k\pi_j=D_{j,k}\beta.
\]
Reducing modulo $\mathfrak p_j=(\pi_j)$ then gives
\[
\pi_k\equiv D_{j,k}A_j^{-1}\beta\pmod{\mathfrak p_j}.
\]
Recall that $\beta=-i\beta'$. We may also replace $A_j^{-1}$ by $A_j$,
since the quadratic residue symbol depends only on the squareclass of
its numerator. Then 
quadratic reciprocity, together with
\[
\pi_j\equiv A_j\alpha\pmod{\beta'},
\]
yields 
\bal
\left(\frac{\pi_k}{\pi_j}\right)
&=\left(\frac{-iD_{j,k}A_j}{\pi_j}\right)
  \left(\frac{\beta'}{\pi_j}\right)\\
&=\left(\frac{-iD_{j,k}A_j}{\pi_j}\right)
  \left(\frac{\pi_j}{\beta'}\right)\\
&=\left(\frac{-iD_{j,k}A_j}{\pi_j}\right)
  \left(\frac{A_j}{\beta'}\right)
  \left(\frac{\alpha}{\beta'}\right).
\nal
The first two factors are determined by the congruence class modulo
$q_6$. Hence, for each $0\leq j<k\leq7$, there is a constant
$c_{j,k}\in\{\pm1\}$ such that
\[
\left(\frac{\pi_k}{\pi_j}\right)
=c_{j,k}\kappa,
\qquad \text{where}\quad
\kappa:=\left(\frac{\alpha}{\beta'}\right)\in\{\pm1\}.
\]
Thus, all $28$ pairwise Legendre symbols are determined by
$(\alpha,\beta)$ modulo $q_6$, up to the single sign $\kappa$. Since
$(\alpha,\beta)\equiv(\alpha_0,\beta_0)\pmod{q_6}$, the constants
$c_{j,k}$ agree with those for $(\alpha_0,\beta_0)$. Hence, the
quadratic-residue graph of $\pi_0,\ldots,\pi_7$ is $G_0$ when $\kappa$
has the same value as for $(\alpha_0,\beta_0)$, and is the complement
of $G_0$ when its sign is reversed.
\end{proof}

A direct calculation of the residue symbols at
\eqref{eq-rank6-base-specialization} gives the edge set
\[
01,02,04,06,16,17,25,26,34,35,36,37,45,47,
\]
where $jk$ denotes the edge $\{j,k\}$. Write $v=(a,b,c)$ and $w=(x,y,z)$ for elements of $\F_2^3$, viewed as
vertices of $G_0$ under the labeling \eqref{eq-F2^3-labels}. A direct
check from the edge set above shows that the $(v,w)$-entry of the
Laplacian $L_0$ is
\[
(L_0)_{v,w}=ax+by+a+x+c+z.
\]

For an eight-vertex graph, taking the complement adds $1$ to every
entry of its Laplacian over $\F_2$. Thus, the two possible Laplacians of
the quadratic-residue graph of $\pi_0,\ldots,\pi_7$ are
\[
(L_\epsilon)_{v,w} =ax+by+a+x+c+z+\epsilon,
\qquad
\epsilon\in\F_2.
\]

\begin{lem}\label{lem-rank6-kernel}
For each $\epsilon\in\F_2$,
\[
\operatorname{row}(L_\epsilon)=\ker(L_\epsilon)=C.
\]
Moreover, $\nu\notin C$.
\end{lem}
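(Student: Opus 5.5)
The plan is to work entirely with the explicit bilinear description $(L_\epsilon)_{v,w}=ax+by+a+x+c+z+\epsilon$ on $\F_2^3\times\F_2^3$, and to read off its row space directly. The first step is to write $L_\epsilon$ as a sum of rank-one matrices indexed by the affine-linear functions $1,x_1,x_2,x_3$ on $\F_2^3$. Write $f_j$ for the evaluation vector of the coordinate function $v\mapsto v_j$. Then
\[
L_\epsilon=f_1f_1^T+f_2f_2^T+f_1\one^T+\one f_1^T+f_3\one^T+\one f_3^T+\epsilon\,\one\one^T .
\]
Every column of $L_\epsilon$ is therefore an $\F_2$-combination of $\one,f_1,f_2,f_3$. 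Since $\one_{H_j}=\one+f_j$, this shows $\operatorname{row}(L_\epsilon)=\operatorname{col}(L_\epsilon)\subseteq C$, using the symmetry of $L_\epsilon$.

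The second step is to show equality, that is, $\operatorname{rank}L_\epsilon=4$. In the basis $(\one,f_1,f_2,f_3)$ of $C$, the matrix above equals $F\,G\,F^T$, where $F$ is the $8\times4$ matrix with columns $\one,f_1,f_2,f_3$ and
\[
G=\begin{pmatrix}\epsilon&1&0&1\\1&1&0&0\\0&0&1&0\\1&0&0&0\end{pmatrix}.
\]
Because the four vectors $\one,f_1,f_2,f_3$ are linearly independent, $F$ has full column rank. Hence $\operatorname{rank}L_\epsilon=\operatorname{rank}G$. Expanding the determinant along the last row gives $\det G=1$ for either value of $\epsilon$, so $\operatorname{rank}L_\epsilon=4$ and $\operatorname{row}(L_\epsilon)=C$.

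The third step handles the kernel. Since $L_\epsilon$ is symmetric, $\ker L_\epsilon=\operatorname{row}(L_\epsilon)^\perp=C^\perp$. It remains to show $C^\perp=C$, which is the standard self-duality of $\mathrm{RM}(1,3)$. Concretely, any two affine-linear functions on $\F_2^3$ have a product of degree at most $2$, and every such polynomial has an even number of zeros and ones on $\F_2^3$. Thus $C\subseteq C^\perp$, and since $\dim C=4=8-4$, we get $C=C^\perp$. Alternatively, one can argue as in the discussion after Lemma~\ref{lem-square-split-supports}: the Kummer supports of $(0,0)$ and the three constructed points lie in $\ker L_\epsilon$ by Proposition~\ref{prop-selmer-linear-system}. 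I would rather use the direct coding-theoretic argument, to avoid circularity.

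The final step shows $\nu\notin C$. Under the labeling \eqref{eq-F2^3-labels}, $\nu$ is supported on the labels $0,6,7$, namely $\{(0,0,0),(1,1,0),(1,0,1)\}$. This set has weight $3$. Every nonzero word of $\mathrm{RM}(1,3)$ has weight $4$ or $8$, so the weight-$3$ vector $\nu$ cannot lie in $C$.

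The only genuinely delicate point is the bookkeeping in the first step. One must verify that the rank-one decomposition reproduces the formula $ax+by+a+x+c+z+\epsilon$ entry by entry, including the diagonal: the diagonal entries come out as $a+b+\epsilon$, as they should for a Laplacian whose $\epsilon=1$ version is the complement. This is where I would be most careful. The determinant of $G$ and the weight argument are routine.
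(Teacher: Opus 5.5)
Your proof is correct, and its overall skeleton matches the paper's. Both show $\operatorname{row}(L_\epsilon)=C$, then use symmetry and self-duality of $C$ to get $\ker L_\epsilon=C$, then a weight argument for $\nu$.

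The one genuinely different step is how you obtain $\operatorname{row}(L_\epsilon)=C$.

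\begin{itemize}
\item The paper reads off each row $R_v$ as the affine function $(a+1)x+by+z+(a+c+\epsilon)$, which gives $\subseteq$. It then recovers $1,x,y,z$ from $R_{(0,0,0)}$ and the differences $R_{v+e_i}+R_v$, which gives $\supseteq$.
\item You factor $L_\epsilon=FGF^T$ with $F=[\one\;f_1\;f_2\;f_3]$ and reduce both inclusions to $\det G=1$.
\end{itemize}

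Your factorization is right. The rank-one decomposition reproduces $ax+by+a+x+c+z+\epsilon$ entrywise, and the row of $FGF^T$ indexed by $v=(a,b,c)$ is $(1,a,b,c)\,G\,F^T$, which is exactly the paper's formula for $R_v$. The cofactor expansion along the last row does give $\det G=1$.

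What each route buys:
\begin{itemize}
\item Your version packages both inclusions into one computation. It also makes the independence from $\epsilon$ transparent: $\epsilon$ sits only in the $(1,1)$ entry of $G$, and that entry drops out of the expansion along the last row.
\item The paper's row-difference argument is more hands-on. It avoids the fact that $\operatorname{rank}(FGF^T)=\operatorname{rank}G$ when $F$ has full column rank, though that fact is standard.
\end{itemize}

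Your self-duality argument is equivalent to the paper's. You use that products of affine functions have degree at most $2$ and hence even weight; the paper uses that the generators $1,x,y,z$ are pairwise orthogonal and of even weight. For $\nu\notin C$, even weight alone suffices, so invoking the minimum weight $4$ of $\mathrm{RM}(1,3)$ is more than you need.
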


\begin{proof}
For $v=(a,b,c)\in\F_2^3$, let $R_v$ denote the row of $L_\epsilon$
indexed by $v$. As $w=(x,y,z)$ varies over $\F_2^3$, the entries of
$R_v$ are given by the affine-linear function
\[
(x,y,z)\longmapsto (a+1)x+by+z+(a+c+\epsilon).
\]
Hence $R_v\in C$ for every $v$, so $\operatorname{row}(L_\epsilon)\subseteq C.$ 

Conversely, identifying affine-linear functions with their evaluation
vectors, we have
\bal
R_{v+(1,0,0)}+R_v &=x+1,\\
R_{v+(0,1,0)}+R_v &=y,\\
R_{v+(0,0,1)}+R_v &=1,
\nal
and $R_{(0,0,0)}=x+z+\epsilon.$ 
Thus, $\operatorname{row}(L_\epsilon)$ contains $1,x,y,z$, and so $C\subseteq\operatorname{row}(L_\epsilon).$

The evaluation vectors of $1,x,y,z$ are pairwise orthogonal and each
has even weight, so $C\subseteq C^\perp$. Since $\dim C=4$, it follows
that $C=C^\perp$. Since $L_\epsilon$ is symmetric, 
\[
\ker L_\epsilon
=\operatorname{row}(L_\epsilon)^\perp
=C.
\]
Finally, the vector $\nu$ (given by \eqref{eq-rank6-nu}) has odd weight $3$, while every
vector in $C$ has even weight, so $\nu\notin C$.
\end{proof}

\begin{prop}\label{prop-rank6-family}
Suppose $(\alpha,\beta)\in\Z[i]^2$ satisfies
\[
(\alpha,\beta)\equiv(\alpha_0,\beta_0)\pmod{q_6},
\]
and that the eight Gaussian integers
\[
\pi_j=\ell_j(\alpha,\beta),
\qquad j \in \{0,\dots, 7\}, 
\]
are Gaussian primes. Set $\p_j = (\pi_j)$ for $j \in \{0, \dots, 7\}$ and let 
\[
b=B_6(\alpha,\beta)=\prod_{j=0}^7\pi_j.
\]
Then $E_b$ is genuinely defined over $\Q(i)$, has Mordell--Weil group
\[
E_b(\Q(i))\cong\Z^6\oplus\Z/2\Z,
\]
and satisfies
\[
\Sh(E_b/\Q(i))[2^\infty]=0.
\]
\end{prop}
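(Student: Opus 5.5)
The proof follows the rank-$4$ template of Proposition~\ref{prop-rank4-family}, with Lemmas~\ref{lem-rank6-residue-graphs} and~\ref{lem-rank6-kernel} doing the linear algebra. The steps, in order: primes and the Selmer system, the Selmer bound, independence of the constructed classes, torsion, and genuine definition.

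\emph{The primes and the Selmer system.} First I show the $\pi_j$ are pairwise nonassociate, since Lemma~\ref{lem-rank6-residue-graphs} assumes this. If $\mathfrak p=(\pi_j)=(\pi_k)$ for some $j\neq k$, then $\mathfrak p$ divides $D_{j,k}\beta=A_j\pi_k-A_k\pi_j$. Every prime dividing $D_{j,k}$ divides $(q_6)$, and the $\pi_j$ are units there, so $\mathfrak p\mid\beta$. Since $\mathfrak p\nmid A_j$, it follows that $\mathfrak p\mid\alpha$. Then $\mathfrak p$ divides every $\pi_l$, so each $\pi_l$ is associate to $\pi_j$. This is excluded by reducing modulo a suitable prime of $q_6$: the base values \eqref{eq-rank6-base-primes} are pairwise nonassociate, and I would exhibit a residue check (for example modulo $13$ or $17$) separating them up to units, as in the rank-$4$ case.

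Given this, Lemma~\ref{lem-rank6-residue-graphs} shows the $\pi_j$ are primary and their quadratic-residue graph is $G_0$ or its complement. The congruence modulo $(1+i)^7$, indeed modulo $4$, preserves $\nu$ as in \eqref{eq-rank6-nu}. Since $b=\prod\pi_j$, we have $s_b=0$, so Proposition~\ref{prop-selmer-linear-system} embeds $\Sel_{1+i}(E_b/\Q(i))$ into the solutions of $L_\epsilon x=s\nu$.

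\emph{The Selmer bound.} By Lemma~\ref{lem-rank6-kernel}, $\operatorname{im}L_\epsilon=\operatorname{row}L_\epsilon=C$ and $\nu\notin C$. Hence $s=1$ has no solution, and for $s=0$ the solutions are exactly $x\in C$. This gives $\dim\Sel\le 4$.

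\emph{Independence of the constructed classes.} The specializations of $\widetilde P_1,\widetilde P_2,\widetilde P_3$ have Kummer classes $[\pi_0\pi_1\pi_4\pi_5]$, $[\pi_0\pi_2\pi_4\pi_7]$ and $[\pi_0\pi_2\pi_5\pi_6]$. The squareclass of a product of distinct primes is determined by its $\F_2$-valuation vector. So independence of $[b]$ and these three classes is the statement $\dim C=4$: the vectors $\one,\one_{H_1},\one_{H_2},\one_{H_3}$ are the evaluation vectors of $1,1+x_1,1+x_2,1+x_3$, which are independent. Lemma~\ref{lem-sharp-descent} with $m=3$ then gives rank $6$ and $\Sh(E_b/\Q(i))[2^\infty]=0$.

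\emph{Torsion.} The coefficient $b$ has odd valuation at eight distinct primes. Hence $b\notin-(\Q(i)^\times)^2$ and $b\notin(-1\pm2i)(\Q(i)^\times)^4$, and Proposition~\ref{prop-torsion} gives torsion $\Z/2\Z$.

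\emph{Genuine definition.} The main obstacle is here, since the base values include $-1+6i$ and $-1-6i$, which are complex conjugate. I would avoid that pair and use, say, $\pi_4$ with norm $97$, which is split. Reducing modulo $5$ or $13$, I would check that $\overline{\pi_4}$ is not associate to any $\pi_j$, by comparing the unit orbit of $\overline{\ell_4(\alpha_0,\beta_0)}$ with the residues of the eight values. Then $v_{\mathfrak p_4}(b)=1$ and $v_{\overline{\mathfrak p}_4}(b)=0$, and Lemma~\ref{lem-nonbasechange} applies. If the residue check fails modulo $5$, I would try the other primes dividing $q_6$, or another $\pi_j$ among $-11-4i$, $17-12i$, $-85+162i$.
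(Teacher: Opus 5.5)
Your argument follows the paper's proof step for step. It uses a residue check modulo $13$ for nonassociateness, then Lemmas~\ref{lem-rank6-residue-graphs} and~\ref{lem-rank6-kernel} with $s_b=0$ to get $\Sel_{1+i}(E_b/\Q(i))\hookrightarrow\{0\}\times C$. Independence comes from $\dim C=4$, and the proof closes with Lemma~\ref{lem-sharp-descent} for $m=3$, Proposition~\ref{prop-torsion}, and Lemma~\ref{lem-nonbasechange}. Your preliminary reduction to ``all eight are associate'' is correct but unnecessary: modulo $13$ the eight values already lie in distinct unit orbits, which is how the paper argues. Modulo $17$ they do not, since $\pi_1\equiv\pi_5$, although your reduction would still let $17$ suffice.

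The one place where your stated plan misfires is the genuine-definition check. You are right to avoid $\pi_0,\pi_6$: in fact $\overline{\pi_0}\equiv\pi_6\pmod{q_6}$, so no congruence modulo $q_6$ can separate them. Your first choice also fails as stated. We have $\pi_4\equiv 4(1+i)\pmod 5$ and $\pi_4\equiv 9(1-i)\pmod{13}$, so modulo both $5$ and $13$ the conjugate $\overline{\pi_4}$ is congruent to a unit multiple of $\pi_4$ itself. Your fallback does succeed. Modulo $17$ the eight residues are $16+6i,6+13i,5i,2+12i,9+4i,6+13i,16+11i,9i$, and the unit orbit $\{9+13i,4+9i,8+4i,13+8i\}$ of $\overline{\pi_4}$ avoids all of them. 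The paper instead uses $\pi_2$ modulo $13$, where the orbit $\{4+12i,1+4i,9+i,12+9i\}$ of $\overline{\pi_2}$ misses all eight residues. With either choice, Lemma~\ref{lem-nonbasechange} applies and the proof is complete.
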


\begin{proof}
Modulo $13$, the eight prime values are congruent to
\[
12+6i,
\quad 2+9i,
\quad 4+i,
\quad 11+9i,
\quad 9+4i,
\quad 2+12i,
\quad 12+7i,
\quad 6+6i.
\]
These residues lie in distinct orbits under multiplication by
$\Z[i]^\times$, so the $\pi_j$ are pairwise nonassociate. The
congruence modulo $(1+i)^7$ implies that the associated vector $\nu$ is the
one in \eqref{eq-rank6-nu}, while
Lemma~\ref{lem-rank6-residue-graphs} shows that their
quadratic-residue Laplacian is $L_\epsilon$ for some
$\epsilon\in\F_2$.

From the definition of $b$ we have $s_b = 0$. Proposition~\ref{prop-selmer-linear-system} and
Lemma~\ref{lem-rank6-kernel} therefore give
\[
\Sel_{1+i}(E_b/\Q(i))
\hookrightarrow
\{0\}\times C,
\]
which implies 
\[
\dim_{\F_2}\Sel_{1+i}(E_b/\Q(i))\leq4.
\]
The three homogeneous points constructed above, together with $(0,0)$,
have the four Kummer classes whose support vectors span $C$ in
\eqref{eq-rm-code}. These classes are therefore independent.
Thus, by Lemma~\ref{lem-sharp-descent} with $m=3$, we have
\[
\operatorname{rk}_{\Z}E_b(\Q(i))=6
\qquad \text{and}\qquad
\Sh(E_b/\Q(i))[2^\infty]=0.
\]
Moreover, $E_b$ has the asserted $\Q(i)$-torsion by Proposition~\ref{prop-torsion}, since $b \notin -(\Q(i)^\times)^2$ and $b \notin (-1 \pm 2i)(\Q(i)^\times)^4$ as $b$ has odd valuation at eight distinct odd primes. 

To show that $E_b$ is genuinely defined over $\Q(i)$, note that the four unit multiples of $4-i$ modulo $13$ are
\[
\{4+12i,\;1+4i,\;9+i,\;12+9i\},
\]
none of which is congruent to one of the eight prime values above. Thus,
$\overline{\pi_2}$ is nonassociate to every $\pi_j$, and so
\[
v_{\mathfrak p_2}(b)=1,
\qquad
v_{\overline{\mathfrak p}_2}(b)=0.
\]
Lemma~\ref{lem-nonbasechange} therefore shows that $E_b$ is genuinely
defined over $\Q(i)$.
\end{proof}

\begin{thm}\label{thm-rank6-infinite}
There are infinitely many pairwise nonisomorphic elliptic curves
$E/\Q(i)$ genuinely defined over $\Q(i)$ with $j(E)=1728$ such that
\[
E(\Q(i))\cong\Z^6\oplus\Z/2\Z
\]
and
\[
\Sh(E/\Q(i))[2^\infty]=0.
\]
\end{thm}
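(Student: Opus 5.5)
The plan is to follow the proof of Theorem~\ref{thm-rank4-infinite} line by line, with Proposition~\ref{prop-rank6-family} taking the place of Proposition~\ref{prop-rank4-family}. We apply Proposition~\ref{prop-linear-primes} to the eight forms $\ell_0,\dots,\ell_7$, with $q=q_6$ and base point $(\alpha_0,\beta_0)$ from \eqref{eq-rank6-base-specialization}. The hypotheses are checked as follows.

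\begin{enumerate}[label=(\roman*)]
\item The forms are pairwise nonproportional, since every determinant $D_{j,k}$ computed in the proof of Lemma~\ref{lem-rank6-residue-graphs} is nonzero.
\item Each $\ell_j$ is primitive; this is a direct inspection of the coefficients, e.g.\ $\ell_2=-6iX+(-2-5i)Y$ has coprime coefficients.
\item Each value $\ell_j(\alpha_0,\beta_0)$ in \eqref{eq-rank6-base-primes} is a Gaussian prime whose norm is a prime outside $\{2,3,5,13,17,41\}$. Hence it is a unit at every prime ideal dividing $(q_6)$, which is condition~(i).
\end{enumerate}

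For condition~(ii) we use Remark~\ref{rem-kai-admissibility} with $n=8$. The prime ideals of norm at most $8$ are $(1+i)$, the two primes above $5$, and $(3)$, of norms $2$, $5$, $5$, $9$. Since $9>8$, $(3)$ does not even need to be covered, but it divides $q_6$ anyway. The prime $7$ is inert with norm $49$. Every prime ideal of norm at most $8$ therefore divides $(q_6)$, and condition~(ii) holds. Proposition~\ref{prop-linear-primes} then gives infinitely many pairs $(\alpha,\beta)\equiv(\alpha_0,\beta_0)\pmod{q_6}$ for which the eight values are pairwise nonassociate Gaussian primes. Proposition~\ref{prop-rank6-family} applies to each such pair, giving $E_b(\Q(i))\cong\Z^6\oplus\Z/2\Z$, $\Sh(E_b/\Q(i))[2^\infty]=0$, and genuine definition over $\Q(i)$.

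Next we pass from pairs to distinct values of $t$. Since $\ell_6=i(2X+Y)$ and $\ell_0=X+Y$ are prime, $\alpha$ and $\beta$ cannot both vanish. We also need $\beta\neq0$. If $\beta=0$, then $\ell_0=\alpha$ and $\ell_1=3\alpha$ would both be prime, which is impossible; so $\beta\ne 0$ and $t=\alpha/\beta$ is defined. As in the rank-$4$ argument, $B_6(\alpha,\beta)=(\beta^2)^4b_6(t)$, so \eqref{eq-projective-isomorphism} gives $E_{B_6(\alpha,\beta)}\cong E_{b_6(t)}$. Suppose two pairs give the same $t$. Then $(\alpha',\beta')=\lambda(\alpha,\beta)$, and since both tuples of values are pairwise nonassociate primes, $\lambda$ is a unit. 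So each $t$ arises from at most four pairs, and we obtain infinitely many values of $t$.

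The step requiring the most care is pairwise nonisomorphism, but the rank-$4$ argument carries over. If two specializations give isomorphic curves, their twist coefficients agree modulo fourth powers. Each coefficient is a product of eight distinct odd primes to the first power, so both involve the same eight prime ideals. Hence, up to units and a permutation, only finitely many ordered $8$-tuples of values are possible. The values of the two nonproportional forms $\ell_0,\ell_1$ determine $(\alpha,\beta)$, so each isomorphism class comes from finitely many pairs. This yields infinitely many pairwise nonisomorphic curves and proves the theorem.
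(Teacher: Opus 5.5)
Your proposal is correct and follows the paper's proof essentially verbatim: you apply Proposition~\ref{prop-linear-primes} with $q=q_6$ and check condition~(ii) via Remark~\ref{rem-kai-admissibility}, invoke Proposition~\ref{prop-rank6-family} for each resulting pair, and reuse the finite-to-one argument from the rank-$4$ case with $\ell_0,\ell_1$. The deviations are cosmetic. You deduce $\beta\neq0$ from the primality of $\ell_0(\alpha,0)=\alpha$ and $\ell_1(\alpha,0)=3\alpha$, whereas the paper reads it off from $\beta\equiv\beta_0\pmod{q_6}$; both work. Your list of prime ideals of norm at most $8$ should not include $(3)$, since its norm is $9$, as you yourself then note.
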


\begin{proof}
Apply Proposition~\ref{prop-linear-primes} to the eight forms
$\ell_0,\dots,\ell_7$ with
\[
q=q_6,
\qquad
(\alpha,\beta)\equiv(\alpha_0,\beta_0)\pmod{q_6}.
\]
The eight forms are pairwise nonproportional and primitive, and the
Gaussian primes in \eqref{eq-rank6-base-primes} are units at every
prime ideal dividing $(q_6)$. Since every prime ideal not dividing
$(q_6)$ has norm greater than $8$, condition~(ii) holds by
Remark~\ref{rem-kai-admissibility}. Proposition~\ref{prop-linear-primes}
therefore gives infinitely many pairs $(\alpha,\beta)$ for which the
eight forms take pairwise nonassociate Gaussian prime values. Hence,
each of these pairs satisfies the hypotheses of
Proposition~\ref{prop-rank6-family}.

The congruence $\beta\equiv\beta_0\pmod{q_6}$ implies $\beta\neq0$, so
$t=\alpha/\beta$ is defined. Moreover, $E_{B_6(\alpha,\beta)}\cong E_{b_6(t)}$ over $\Q(i)$ since
\[
B_6(\alpha,\beta)=\beta^8b_6(t),
\] 
Thus, Proposition~\ref{prop-rank6-family} shows that the
corresponding curves $E_{b_6(t)}$ are genuinely defined over $\Q(i)$
and satisfy
\[
E_{b_6(t)}(\Q(i))\cong\Z^6\oplus\Z/2\Z,
\qquad
\Sh(E_{b_6(t)}/\Q(i))[2^\infty]=0.
\]

As in the proof of Theorem~\ref{thm-rank4-infinite}, the pairs supplied
by Proposition~\ref{prop-linear-primes} yield infinitely many values of
$t$. The same finite-to-one argument, using the nonproportional forms
$\ell_0$ and $\ell_1$, shows that each quartic-twist class occurs only
finitely often. Therefore, infinitely many of the resulting curves are
pairwise nonisomorphic over $\Q(i)$.
\end{proof}
 
\begingroup
\renewcommand*{\bibfont}{\footnotesize}
\emergencystretch=2em
\printbibliography
\endgroup
\end{document}